\numberwithin{equation}{section}
\newtheorem{theorem}{Theorem}[section]
\newtheorem{proposition}[theorem]{Proposition}
\newtheorem{lemma}[theorem]{Lemma}
\newtheorem{remark}[theorem]{Remark}
\newcommand\nl[2]{\|#2\|_{L^{#1}}}
\newcommand\nlo[2]{\|#2\|_{L^{#1}(\Om)}}
\newcommand\nh[2]{\|#2\|_{H^{#1}}}
\newcommand\nhu[1]{\|#1\|_{H^1}}
\newcommand\nhua[1]{\|#1\|_{H^1_\al}}
\newcommand\sob[3]{\|#3\|_{W^{#1,#2}}}
\renewcommand{\leq}{\leqslant}
\renewcommand{\geq}{\geqslant}
\DeclareMathOperator{\dive}{div}
\DeclareMathOperator{\curl}{curl}
\def\ep{\varepsilon}
\def\al{\alpha}
\def\om{\omega}
\def\Om{\Omega}
\def\dt{\partial_{t}}
\def\la{\lambda}
\def\wt{\widetilde}
\def\A{\mathbb{A}}
\def\AA{\mathbb{A}_2}
\def\C{\mathbb{C}}
\def\Ga{\Gamma}
\def\ualk{{u^{\al_k}}}
\def\ual{{u^{\al}}}
\def\uall{u^{\al}}
\def\ualz{{u^\al_0}}
\def\tu{\widetilde{u}}
\def\txi{{Y}^\al_i}
\def\qb{{\overline{q}}}
\def\ub{{\overline{u}}}
\def\qbb{{\check{q}}}
\def\ubb{{\check{u}}}
\def\vbb{{\check{v}}}
\def\rha{\rightharpoonup}
\def\hra{\hookrightarrow}
\def\omb{\overline\om}
\def\T{\mathbb{T}}
\def\S{\mathbb{S}}
\def\R{\mathbb{R}}
\def\PP{\mathbb{P}}
\begin{document} 

\title{Weak solutions for the $\al$--Euler equations and convergence to Euler}
\author{Adriana Valentina Busuioc and Dragoş Iftimie}
\date{}

\maketitle

\abstract

We consider the limit $\al\to0$ for the $\alpha$--Euler equations in a two-dimensional bounded domain with Dirichlet boundary conditions. Assuming that the vorticity is bounded in $L^p$, we prove the existence of a global solution and we show  the convergence towards a solution of the incompressible Euler equation with $L^p$ vorticity. The domain can be multiply-connected. We also discuss the case of the second grade fluid when both $\al$ and $\nu$ go to 0.

\section{Introduction}

We consider in this paper the incompressible $\al$--Euler  equations:
\begin{equation}\label{alphaeuler}
  \partial_t (u-\al\Delta u)+u\cdot\nabla(u-\al\Delta u)+\sum_j(u-\al\Delta u)_j\nabla u_j=-\nabla \pi, \qquad \dive u=0,
\end{equation}
on a 2D smooth domain $\Om$ and assuming Dirichlet boundary conditions:
\begin{equation}\label{Dir}
u\bigl|_{\partial\Om}=0.  
\end{equation}

We will not discuss in detail the significance of the $\al$--Euler equations as this was addressed in many other papers. We will simply mention three important facts:
\begin{itemize}
\item The $\al$--Euler equations are the vanishing viscosity case of the second grade fluids found in \cite{dunn_thermodynamics_1974};
\item Like the incompressible Euler equations, the $\al$--Euler equations describe geodesic motion on the group of volume preserving diffeomorphisms for a metric containing the $H^1$ norm of the velocity, see \cite{holm_euler-poincare_1998-1}.
\item The $\al$--Euler equations can also be obtained via an averaging procedure in the Euler equations, see \cite{holm_euler-poincare_1998-1}.
\end{itemize}

When setting $\al=0$ in \eqref{alphaeuler} we formally obtain the incompressible Euler equations
\begin{equation}
  \label{euler}
\dt \ub+\ub\cdot\nabla \ub=-\nabla \overline \pi,\qquad\dive \ub=0.  
\end{equation}
However the Dirichlet boundary conditions \eqref{Dir} are not compatible with the Euler equations. Instead me must use the no-penetration boundary conditions
\begin{equation}
  \label{tan}
\ub\cdot n \bigl|_{\partial\Om}=0
\end{equation}
where $n$ is the unit exterior normal to $\partial\Om$.

A natural question is whether we have convergence of the solutions of \eqref{alphaeuler}--\eqref{Dir} towards solutions of \eqref{euler}--\eqref{tan} when $\al\to0$. The main problem in showing this convergence is the difference in the boundary conditions \eqref{Dir} and \eqref{tan}. Therefore boundary layers are expected to appear. In addition,  we can't have strong estimates uniform in $\al$ for the solutions of \eqref{alphaeuler}. More precisely, the solutions of \eqref{alphaeuler} can't be bounded in any space where the trace to the boundary $\partial\Om$ is well-defined. 

Let us mention at this point that in the case of Navier boundary conditions the boundary layers are weaker and we were able to show in \cite{busuioc_incompressible_2012} the expected convergence, see also \cite{busuioc_uniform_2016} for the case of the dimension three. The case of the Dirichlet boundary conditions was only recently dealt with, and only in dimension two and for a simply-connected domain. More precisely, the authors of \cite{lopes_filho_convergence_2015} were able to adapt the Kato criterion for the vanishing viscosity limit, see \cite{kato_remarks_1984}, to the case of the $\al\to0$ limit obtaining the following result.
\begin{theorem}[\cite{lopes_filho_convergence_2015}]
Assume that $\Om\subset\R^2$ is simply-connected. Let $\ub_0\in H^3(\Om)$ be divergence free and tangent to the boundary. Assume that $u_0^\al$ verifies
\begin{itemize}
\item $\ualz\in H^3(\Om)$, $\dive u_0^\al=0$ and $u_0^\al\bigl|_{\partial\Om}=0$;
\item $\al^{\frac12}\nlo2{\nabla \ualz}\to0$ and $\al^{\frac32}\|\ualz\|_{H^3(\Om)}$ is bounded as $\al\to0$;
\item $\ualz\to \ub_0$ in $L^2(\Om)$ as $\al\to0$. 
\end{itemize}
Then the unique global $H^3$ solution of \eqref{alphaeuler}--\eqref{Dir} with initial data $\ualz$ converges  in $L^\infty_{loc}([0,\infty);L^2(\Om))$ as $\al\to0$ towards the unique global $H^3$ solution of \eqref{euler}--\eqref{tan} with initial data $\ub_0$.
\end{theorem}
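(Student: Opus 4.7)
The approach is Kato's boundary-layer criterion, adapted from the vanishing viscosity setting \cite{kato_remarks_1984} to the $\al\to 0$ limit. My plan is to introduce a divergence-free corrector $v^\al$ supported in a strip $\Gamma_\delta=\{x\in\Om:\operatorname{dist}(x,\partial\Om)<\delta\}$ of width $\delta=\delta(\al)$, whose tangential boundary trace coincides with the tangential trace of $\ub$ on $\partial\Om$. Setting $\tiu=\ub-v^\al$ gives $\tiu\bigl|_{\partial\Om}=0$, so the difference $w=\ual-\tiu$ satisfies the Dirichlet boundary condition. Such a corrector is built via a stream function vanishing on $\partial\Om$ and, using the $H^3$ regularity of $\ub$, can be arranged to satisfy $\nlo{2}{v^\al}\lesssim\delta^{1/2}$ and $\nlo{2}{\nabla v^\al}\lesssim\delta^{-1/2}$. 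The goal is to show that $w$ is small in the natural $\al$-energy norm.

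The central computation is to differentiate $\mathcal E(t)=\nlo{2}{w(t)}^2+\al\nlo{2}{\nabla w(t)}^2$ using the equation satisfied by $w$, obtained by subtracting from \eqref{alphaeuler} the Euler equation for $\ub$ together with the evolution of the auxiliary field $v^\al$. Because $\ual$, $\tiu$, and $w$ all vanish on $\partial\Om$, every boundary contribution arising from integration by parts disappears, and the skew-symmetric cancellation already used in the derivation of energy conservation for \eqref{alphaeuler} eliminates the pure trilinear form. What remains is a Gronwall-type term $\int(w\cdot\nabla)\ub\cdot w$, controlled by the smoothness of $\ub$, together with error terms localized in the strip $\Gamma_\delta$ and terms of the form $\al\int\Delta\ual\cdot F$ paired against the corrector or its first derivatives. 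These last ones are rewritten as $-\al\int\nabla\ual:\nabla F$ after integration by parts (the boundary integral vanishes since $\ual\bigl|_{\partial\Om}=0$), exploiting that $\nabla F$ is supported in $\Gamma_\delta$.

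Choosing $\delta=\al$ balances the corrector estimates against the available factor of $\al$. Energy conservation for \eqref{alphaeuler} combined with the hypothesis $\al^{1/2}\nlo{2}{\nabla\ualz}\to 0$ gives $\al^{1/2}\nlo{2}{\nabla\ual(t)}$ uniformly small in $t$, and a persistence-of-regularity argument for the $\al$-Euler flow propagates the bound on $\al^{3/2}\|\ualz\|_{H^3(\Om)}$ to later times. These two bounds, together with Hardy's inequality (available because $\ual$ vanishes on $\partial\Om$), show that the strip and Laplacian terms are $o(1)$ uniformly on every compact time interval. Gronwall then yields $\mathcal E(t)\to 0$, and since $\nlo{2}{\tiu(t)-\ub(t)}=\nlo{2}{v^\al(t)}\to 0$, the triangle inequality gives convergence in $L^\infty_{loc}([0,\infty);L^2(\Om))$.

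The main obstacle is the absence of dissipation. Unlike in the Navier--Stokes setting, where the viscous term contributes a negative $\nu\nlo{2}{\nabla w}^2$ on the right-hand side capable of absorbing errors, the $\al$-Euler equations are conservative and no such absorbent is available. Every error term must therefore be made small directly from the assumptions, which is precisely the role of the quantitative decay of $\al^{1/2}\nlo{2}{\nabla\ualz}$ and of the boundedness of the stronger quantity $\al^{3/2}\|\ualz\|_{H^3(\Om)}$ -- the latter being exactly what is needed to control $\al\Delta\ual$ within a boundary strip of vanishing width.
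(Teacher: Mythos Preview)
The theorem you are addressing is not proved in this paper; it is quoted from \cite{lopes_filho_convergence_2015} as prior work in the introduction, with no proof given here. The paper explicitly describes the approach of that reference as ``adapt[ing] the Kato criterion for the vanishing viscosity limit, see \cite{kato_remarks_1984},'' and later remarks that ``the authors make a direct estimate of the $L^2$ norm of $\ual-\ub$ via energy estimates.'' Your proposal correctly identifies and outlines exactly this Kato-type boundary-layer argument---a divergence-free corrector supported in a strip of width $\delta\sim\al$, an energy estimate on the corrected difference $w$, and Gronwall. Your diagnosis of the central obstacle (no dissipative term to absorb errors, so every strip and Laplacian term must be driven to zero directly from the hypotheses on $\al^{1/2}\nlo2{\nabla\ualz}$ and $\al^{3/2}\|\ualz\|_{H^3}$) also matches the paper's explanation of why the method of \cite{lopes_filho_convergence_2015} is tied to $H^3$ limit solutions. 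In short, your sketch is faithful to the cited proof as the paper describes it; there is simply no in-paper proof to compare against line by line.
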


Due to the method of proof, the Kato criterion, it seems that the approach of \cite{lopes_filho_convergence_2015} can only prove convergence towards a $H^3$ solution of the incompressible Euler equation. But other solutions of the Euler equations exist: the Yudovich solutions with bounded vorticity, the weak solutions with $L^p$ vorticity and the vortex sheet solutions where the vorticity is a measure. Our first aim in this paper is to prove that convergence still holds when the limit solution is a weak solution of the Euler equation with $L^p$ vorticity. A secondary aim is to be able to consider multiply-connected domains and also to construct weak solutions of \eqref{alphaeuler}--\eqref{Dir}. Our main result reads as follows.

\begin{theorem}\label{mainthm}
Let $\Om$ be a smooth bounded domain of $\R^2$ and $1<p<\infty$. Let $\ub_0\in W^{1,p}(\Om)$ be divergence free and tangent to the boundary. Let $u_0^\al$ be such that
\begin{itemize}
\item $u_0^\al\in W^{3,p}(\Om)$, $\dive u_0^\al=0$ and $u_0^\al\bigl|_{\partial\Om}=0$;
\item $\al^{\frac12}\nlo2{\nabla u_0^\al}$ and $\nlo p{\curl(u^\al_0-\al\Delta u^\al_0)}$ are bounded independently of $\al$;
\item $u_0^\al\to \ub_0$ in $L^2(\Om)$ as $\al\to0$. 
\end{itemize}
Then there exists a global solution $\ual\in L^\infty(\R_+;W^{3,p}(\Om))$ of \eqref{alphaeuler}--\eqref{Dir}. Moreover, there exists a subsequence of solutions $u^{\al_k}$ and a global solution $\ub$ of the Euler equations \eqref{euler}--\eqref{tan} with initial data $\ub_0$ and vorticity bounded in $L^p(\Om)$, $\curl\ub\in L^\infty(\R_+;L^p(\Om))$, such that $\ualk\to \ub$ in  $L^\infty_{loc}([0,\infty);W^{s,p}(\Om))$ for all $s<\frac1p$ as $\al_k\to0$.
\end{theorem}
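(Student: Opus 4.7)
The plan rests on the two-dimensional vortex structure of the $\al$-Euler system: applying $\curl$ to \eqref{alphaeuler} yields a pure transport equation
\begin{equation*}
\dt q^\al+u^\al\cdot\nabla q^\al=0,\qquad q^\al:=\curl(u^\al-\al\Delta u^\al),
\end{equation*}
while pairing \eqref{alphaeuler} with $u^\al$ conserves the modified energy $\|u^\al\|_{L^2}^2+\al\|\nabla u^\al\|_{L^2}^2$. Together with the hypotheses on the initial data, these two conservation laws furnish all the uniform bounds needed to construct a solution at each fixed $\al$ and to pass to the limit $\al\to 0$.

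Existence of a global $W^{3,p}$ solution for each fixed $\al>0$ is obtained by a standard scheme (Galerkin approximation, or a fixed-point argument on the transport equation for $q^\al$ coupled with an elliptic inversion for $u^\al$) adapted to this regularity class; the a priori estimates above preclude finite-time blow-up. The main work lies in deriving bounds that are uniform in $\al$. The conservation of the energy and of $\|q^\al\|_{L^p}$ immediately yield control of $\|u^\al\|_{L^\infty_t L^2}$, $\al^{1/2}\|\nabla u^\al\|_{L^\infty_t L^2}$, and $\|q^\al\|_{L^\infty_t L^p}$. To promote this to a uniform $W^{1,p}$ bound on $u^\al$, I would introduce $v^\al:=u^\al-\al\Delta u^\al$, which is divergence-free with $\curl v^\al=q^\al\in L^p$, and then recover $u^\al$ by solving the Stokes-like resolvent $(I-\al\Delta)u^\al=v^\al$ with $u^\al|_{\partial\Om}=0$. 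A careful $L^p$ elliptic analysis of this resolvent, together with a Biot--Savart-type decomposition of $v^\al$ that includes a harmonic component whose circulations around the inner boundary components are conserved by the flow, should deliver a uniform bound on $\|\curl u^\al\|_{L^p}$ and hence on $\|u^\al\|_{W^{1,p}}$ via Calderón--Zygmund and the uniform $L^2$ estimate.

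Once such a uniform $W^{1,p}$ bound is in hand, strong compactness follows from Aubin--Lions: controlling $\dt v^\al$ in a negative Sobolev norm directly from the vorticity transport equation, and then inverting $I-\al\Delta$, yields a bound on $\dt u^\al$ compatible with the $W^{1,p}$ control of $u^\al$. One extracts a subsequence $u^{\al_k}$ converging strongly in $L^\infty_{loc}([0,\infty);W^{s,p}(\Om))$ for every $s<1/p$; the restriction $s<1/p$ is natural because $W^{s,p}(\Om)$ has no well-defined trace in that range, so the mismatch between $u^{\al_k}|_{\partial\Om}=0$ and the weaker tangency condition on $\ub$ is invisible to such norms and the boundary layer is simply bypassed. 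Passing to the limit in the vorticity transport equation against a smooth test function then produces $\dt\om+\ub\cdot\nabla\om=0$ with $\om=\curl\ub\in L^\infty_t L^p_x$, provided one identifies the weak limit of $q^\al$ with $\om$; this identification is carried out by integration by parts, the $-\al\Delta$-contribution in $q^\al$ producing an $O(\al^{1/2})$ factor via the uniform $\al^{1/2}\|\nabla u^\al\|_{L^2}$ bound.

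The principal obstacle, in my view, is the $L^p$ elliptic step with constants tracked explicitly in $\al$, together with the bookkeeping of harmonic correctors in the multiply-connected setting; the subsequent passage to the limit is delicate precisely because strong convergence is only available in spaces with no trace, which is also why one cannot hope for convergence in $W^{s,p}$ for any $s\geq 1/p$.
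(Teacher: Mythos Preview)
Your proposal contains a genuine gap at the central step: the uniform $W^{1,p}$ bound on $u^\al$ you hope to extract from the elliptic resolvent analysis is simply false, and your own final paragraph explains why. Since $u^\al|_{\partial\Om}=0$ while the Euler limit $\ub$ is merely tangent, any uniform bound in a space with a trace (in particular $W^{1,p}$, or even $W^{s,p}$ with $s>1/p$) would force $\ub|_{\partial\Om}=0$ after passing to the weak limit. So no amount of ``careful $L^p$ elliptic analysis'' of $(I-\al\Delta)^{-1}$ can produce such a bound; the boundary layer is real and it lives exactly at the $W^{1/p,p}$ threshold. Concretely, the Stokes resolvent $(1+\al\A)^{-1}$ acting on a $W^{1,p}$ divergence-free tangent field is only uniformly bounded into $W^{s,p}$ for $s<1/p$, because $D(\A^{s/2})$ coincides with such fields precisely when $s<1/p$; for $s>1/p$ the domain requires the full Dirichlet condition and the operator norm blows up like a negative power of $\al$.

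What the paper actually does is use the analyticity of the Stokes semigroup (Giga's resolvent estimate) to obtain two sharp $\al$-dependent bounds: a uniform $W^{1/p-\ep,p}$ bound on $u^\al$, and an $H^1$ bound of size $C\al^{-\eta}$ with some $\eta<\tfrac12$ (see Proposition~\ref{propvelo}). The second bound is the key missing idea in your sketch. The trivial energy estimate only gives $\|\nabla u^\al\|_{L^2}\leq C\al^{-1/2}$, which merely shows that the quadratic terms $\al\,\partial_i u^\al_j\,\partial_i u^\al$ appearing in the velocity formulation \eqref{4} are bounded; one needs $\eta<\tfrac12$ to make them vanish as $\al\to0$. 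Your plan to pass to the limit in the vorticity equation instead of the velocity equation sidesteps this particular issue, but it still rests on the false $W^{1,p}$ compactness, and in any case you would then have to argue separately that the limit vorticity equation yields a weak solution of the Euler \emph{velocity} equation, which on a multiply-connected domain requires tracking the circulations on each $\Gamma_i$ through the limit (the paper does this via Proposition~\ref{propcirc}, again using the Stokes operator functional calculus).
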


A few remarks are needed to understand this result. Let us observe first that the conclusion of this theorem can't be true for $s>\frac1p$. Indeed, suppose that $\ualk\to u$ in  $L^1_{loc}([0,\infty);W^{s,p}(\Om))$ for some $s>\frac1p$. Because functions in $W^{s,p}(\Om)$ with  $s>\frac1p$ have traces on the boundary, we infer that we also have convergence of the traces on the boundary. Since $\ualk$ vanishes on the boundary we therefore deduce that the limit solution $u$ must vanish on the boundary. This is of course not true for solutions of the Euler equations unless some very special situation arises. In general, $\ual$ is unbounded in $L^r_{loc}([0,\infty);W^{s,p}(\Om))$ for any $s>\frac1p$ and $r>1$. 

A second remark is that, by Sobolev embeddings, we have that $\ualk\to u$ in the space $L^\infty_{loc}([0,\infty);L^2(\Om))$ as in the result of \cite{lopes_filho_convergence_2015}. More generally, we obtain that $\ualk\to u$   in $L^\infty_{loc}([0,\infty);H^{s}(\Om))$ for all $s<\min(1-\frac1{p},\frac12)$. 

A third remark is that even though we assume $p<\infty$ in Theorem \ref{mainthm}, it is quite easy to obtain a similar result for the case $p=\infty$. Modifying slightly the conclusion, we can prove in this case convergence towards the Yudovich solution of the Euler equation. Moreover, the Yudovich solutions are unique so we get convergence of the whole sequence $u^\al$ and not only for a subsequence $\ualk$. More details can be found in Remark \ref{yudovich}.
 
Let us comment now on the existence part of Theorem \ref{mainthm}. The main improvement about existence of solutions is that we allow the domain $\Om$ to be multiply-connected and moreover we construct weak solutions. As far as we know, all previous global existence results for $\al$--Euler or second grade fluids with Dirichlet boundary conditions are given for simply-connected domains or with some conditions on the coefficient $\al$ and the initial data, see \cite{cioranescu_existence_1984,cioranescu_weak_1997,galdi_existence_1993,galdi_further_1994}. Here we deal with multiply-connected domains by keeping track of the circulations of $u-\al\Delta u$ on the connected components of the boundary and by exploiting the transport equation that the vorticity $q=\curl(u-\al\Delta u)$ verifies.
Even for simply-connected domains the existence part of Theorem \ref{mainthm} is new, although in the absence of boundaries \cite{oliver_vortex_2001} shows global existence of solutions in the full plane if the initial vorticity $\curl(u_0-\al\Delta u_0)$ is a bounded measure.

Our initial goal was to improve the result of \cite{lopes_filho_convergence_2015}. We achieved that in several aspects. The most important one is that we allow for much weaker solutions of the Euler equations, \textit{i.e.} we prove convergence towards weak solutions with $L^p$ vorticity instead of $H^3$ solutions. The second improvement is that we prove stronger convergence, \textit{i.e.} we prove  strong convergence in $W^{s,p}(\Om)$, $s<\frac1p$, uniformly in time. The third improvement is that we allow for multiply-connected domains. 

Our approach is quite different from that of \cite{lopes_filho_convergence_2015}. In \cite{lopes_filho_convergence_2015}, the authors make a direct estimate of the $L^2$ norm of  $\ual-\ub$ via energy estimates. Here, we use compactness methods and we obtain the required estimates uniform in $\al$ by using the analyticity of the Stokes semi-group.

The plan of the paper is the following. In the next section we introduce some notation, we recall some basic facts about the Stokes operator and prove some preliminary results. In Section \ref{sect3} we prove our main estimates with constants independent of $\al$. These estimates rely on the analyticity of the Stokes semi-group. In Section \ref{sect4} we prove the existence part of Theorem \ref{mainthm}. In Section \ref{sect5} we pass to the limit $\al\to0$ and complete the proof of Theorem \ref{mainthm}. We end this paper with Section \ref{sect6} where we extend Theorem \ref{mainthm} to the case of second grade fluids.

\section{Notation and preliminary results}

Throughout this paper $C$ denotes a generic constant \textit{independent of $\al$} (except in Section \ref{sect4} where it is allowed to depend on $\al$) whose value may change from one relation to another. 

All function spaces are defined on $\Om$ unless otherwise specified.
We denote by $L^p=L^p(\Om)$, $H^1=H^1(\Om)$, $W^{s,p}=W^{s,p}(\Om)$ the usual Sobolev
spaces with the usual norms where for $s$ non-integer $W^{s,p}$ is
defined by interpolation. We shall also use the $H^1_\al$ norm
defined by
\begin{equation*}
\nhua u=\bigl( \nl2u^2+\al\nl2{\nabla u}^2\bigr)^{\frac12}.  
\end{equation*}

The space $L^p_\sigma=L^p_\sigma(\Om)$ is the subspace of $L^p $ formed by
all divergence free and
tangent to the boundary vector fields. We endow $L^p_\sigma $ with
the $L^p$ norm. Recall that the divergence free
condition allows to define the normal trace of a vector field on the
boundary. 

We denote by $\PP$ the standard Leray projector, that is the $L^2$
orthogonal projection from $L^2$ to $L^2_\sigma$. It is well-known that $\PP$
extends by density to a bounded operator from $L^p $ to $L^p_\sigma $.

We denote by $\A=-\PP\Delta$ the classical Stokes operator that we view
as an unbounded operator on $L^p_\sigma$. It is well-known that the domain
of $\A$ is
\begin{equation*}
D(\A)=\{u\in W^{2,p}\ ;\ \dive u=0\text{ and }u\bigl|_{\partial\Om}=0\}.  
\end{equation*}

We know that for any $\la\in\C\setminus(-\infty,0)$ the operator
$\la+\A$ is invertible and for any $f\in L^p_\sigma$ we have that
$(\la+\A)^{-1}\in D(\A)$, see for example \cite[Proposition
2.1]{giga_analyticity_1981}.

A property that will be crucial in what follows is the analyticity of
the Stokes semi-group which can be expressed in terms of the following
inequality proved in \cite[Theorem 1]{giga_analyticity_1981}:
\begin{theorem}[\cite{giga_analyticity_1981}]\label{analyticity}
For any $\ep>0$ there exists a constant $C_\ep$ such that for all
$\la\in\C\setminus\{0\}$, $|\arg\la|\leq\pi-\ep$, and for all $f\in L^p_\sigma$ the
following inequality holds true:
\begin{equation*}
\nl p{(\la+\A)^{-1})f}\leq \frac{C_\ep}{|\la|}\nl pf.
\end{equation*}
\end{theorem}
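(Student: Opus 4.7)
The plan is to prove the resolvent estimate by treating, for $f\in L^p_\sigma$, the Stokes resolvent problem
\begin{equation*}
\la u-\Delta u+\nabla p=f,\qquad \dive u=0 \text{ in } \Om,\qquad u\big|_{\partial\Om}=0,
\end{equation*}
and showing directly that $\nl p u\leq \frac{C_\ep}{|\la|}\nl p f$ uniformly in $\la$ in the sector $|\arg\la|\leq \pi-\ep$.

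First I would handle $p=2$ via spectral theory. The operator $\A$ is self-adjoint and non-negative on $L^2_\sigma$ since $\langle \A u,u\rangle=\nl 2{\nabla u}^2\geq 0$ by integration by parts. Hence its spectrum lies in $[0,\infty)$, and the functional calculus immediately gives
\begin{equation*}
\|(\la+\A)^{-1}\|_{L^2_\sigma\to L^2_\sigma}\leq \frac{1}{\operatorname{dist}(\la,[0,\infty))}\leq \frac{1}{|\la|\sin\ep}
\end{equation*}
for $\la$ in the sector $|\arg\la|\leq\pi-\ep$, yielding the case $p=2$ with $C_\ep=1/\sin\ep$.

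For general $p\in(1,\infty)$ I would use a localization argument. Choose a smooth partition of unity $\{\varphi_j\}$ of $\overline\Om$ subordinated to a finite cover by interior balls and boundary patches, each boundary patch being straightened to a piece of half-space via a smooth change of coordinates. Multiplying the resolvent equation by $\varphi_j$ yields a Stokes resolvent problem for $\varphi_j u$ in the corresponding model domain (either $\R^2$ or $\R^2_+$ with Dirichlet conditions), with right-hand side $\varphi_j f+R_j$ where $R_j=-[\Delta,\varphi_j]u+p\nabla\varphi_j$ is a lower-order commutator involving $u$ and $p$ but no spectral parameter. On $\R^2$ one has explicit representations: taking divergence of the equation gives $\Delta p=\dive f$, so $p=\Delta^{-1}\dive f$ is a Riesz transform of $f$; then $u=(\la-\Delta)^{-1}\PP f$, and Mikhlin's multiplier theorem applied to the symbol $(\la+|\xi|^2)^{-1}$, together with $L^p$-boundedness of $\PP$, gives the $C_\ep/|\la|$ bound. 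In the half-space the analysis is more intricate: one takes partial Fourier transform in the tangential direction, solves the resulting coupled ODE system using the no-slip and divergence-free conditions, and verifies Mikhlin-type bounds for the resulting Poisson-like kernel.

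Summing over the partition and combining with $L^p$ elliptic regularity for $\nabla p$, one absorbs the commutator contribution $\sum_j R_j$ into the left-hand side for $|\la|\geq \la_0$ large enough; for bounded $|\la|$ away from the spectrum, a standard compactness argument, using that $\A$ has compact resolvent on $L^p_\sigma$ by Rellich, gives the estimate with a constant depending only on $\ep$. The main obstacle will be the half-space model problem: the coupling of Dirichlet boundary conditions with the incompressibility constraint means that $p$ is not simply a Riesz transform of $f$ and the associated Stokes resolvent kernel is genuinely more complex than a heat kernel; obtaining multiplier bounds uniform in $\la$ throughout the sector $|\arg\la|\leq\pi-\ep$, rather than pointwise in $\la$, is the technically delicate step on which the whole argument rests.
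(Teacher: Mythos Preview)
The paper does not prove this theorem at all: it is quoted verbatim as \cite[Theorem~1]{giga_analyticity_1981} and used as a black box. There is therefore no ``paper's own proof'' to compare against; what you have written is a sketch of Giga's argument, not of anything in the present paper.

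That said, your outline is broadly faithful to how resolvent estimates of this type are obtained. The $p=2$ case by self-adjointness is straightforward. The reduction by localization to the model problems in $\R^n$ and $\R^n_+$, treated by Fourier methods and Mikhlin-type multiplier bounds, is indeed the strategy of Giga (building on Solonnikov) and later Farwig--Sohr. You correctly single out the half-space as the heart of the matter: the tangential Fourier transform leads to an ODE system whose solution operator must be shown to satisfy multiplier estimates uniformly over the sector, and this is genuinely technical. One point you gloss over is the pressure: in the localization step the term $p\,\nabla\varphi_j$ is not lower order in any obvious sense, and controlling $\nl p p$ (or rather $\nabla p$) in terms of $f$ with the right $\la$-dependence is part of the work, not a side remark. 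Your compactness argument for bounded $|\la|$ also needs the a priori knowledge that $(\la+\A)^{-1}$ exists on $L^p_\sigma$ throughout the sector, which is itself part of what is being proved; Giga handles this by a continuation argument in $p$ from $p=2$. But for the purposes of this paper none of this is required: the theorem is simply cited.
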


We will also need to characterize the domains of the powers of $\A$. The following proposition is a consequence of \cite[Theorem
3]{giga_domains_1985} and of the results of \cite{fujiwara_asymptotic_1969}.
\begin{proposition}\label{domain}
Let  $0\leq s\leq 2$. We have that $D(\A^{\frac s2})\hookrightarrow W^{s,p}$. Moreover,
\begin{equation*}
D(\A^{\frac s2})= \{f\in W^{s,p}\ ;\ \dive f=0\text{ and f is tangent to the boundary }\}\quad\text{if}\quad s<\frac1p 
\end{equation*}
and
\begin{equation*}
D(\A^{\frac s2})=\{f\in W^{s,p}\ ;\ \dive f=0\text{ and }f\bigl|_{\partial\Om}=0\}\quad\text{if}\quad s>\frac1p.   
\end{equation*}
\end{proposition}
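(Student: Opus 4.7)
My plan is to deduce Proposition \ref{domain} by combining two external ingredients: a complex-interpolation characterization of the fractional power domains of $\A$, and an interpolation identification of fractional Sobolev spaces that respect the boundary conditions built into $D(\A)$.

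First I would invoke \cite[Theorem 3]{giga_domains_1985}, which tells us that the domains of the fractional powers of the Stokes operator are given, up to equivalent norms, by the complex interpolation between $L^p_\sigma$ and $D(\A)$; more precisely, for $0\leq\theta\leq 1$ one has
\begin{equation*}
D(\A^\theta)=[L^p_\sigma,D(\A)]_\theta.
\end{equation*}
Setting $\theta=s/2$, the problem reduces to identifying the complex interpolation space $[L^p_\sigma,\,W^{2,p}\cap W^{1,p}_0\cap L^p_\sigma]_{s/2}$ as a subspace of $W^{s,p}$ subject to the appropriate divergence-free/boundary constraints.

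The key point is then to appeal to the results of \cite{fujiwara_asymptotic_1969}, which describe the complex interpolation between $L^p$ and a space of $W^{2,p}$ functions subject to Dirichlet conditions; in a form convenient here these results give
\begin{equation*}
[L^p,\,W^{2,p}\cap W^{1,p}_0]_{s/2}=
\begin{cases}
W^{s,p}&\text{if }s<\tfrac1p,\\
\{f\in W^{s,p}\ ;\ f\bigl|_{\partial\Om}=0\}&\text{if }s>\tfrac1p,
\end{cases}
\end{equation*}
the dichotomy reflecting the threshold at which the boundary trace is a well-defined continuous operator on $W^{s,p}$. Intersecting with the closed subspace $L^p_\sigma$ is compatible with complex interpolation (the Leray projector $\PP$ is a bounded projection onto $L^p_\sigma$, and interpolation commutes with complemented subspaces), so the divergence-free and tangency conditions carry through. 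When $s<1/p$, the boundary vanishing condition disappears but the tangency condition survives via membership in $L^p_\sigma$; when $s>1/p$, the full Dirichlet condition is retained because the trace is then a continuous functional.

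The continuous embedding $D(\A^{s/2})\hookrightarrow W^{s,p}$ on the full range $0\leq s\leq 2$ follows at once from the same interpolation argument applied between the endpoints $D(\A^0)=L^p_\sigma\hookrightarrow L^p$ and $D(\A)\hookrightarrow W^{2,p}$. The only subtlety I anticipate is verifying that Fujiwara's statement, proved in \cite{fujiwara_asymptotic_1969} for scalar elliptic operators, applies component-wise to the Stokes setting in a manner compatible with the Leray projection—this is precisely what \cite{giga_domains_1985} is designed to handle, so the main task is really bookkeeping rather than new analysis. I would not expect to say anything beyond citing these two papers and pointing out how the cases $s\lessgtr 1/p$ arise from the trace threshold in the scalar interpolation result.
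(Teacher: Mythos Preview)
Your proposal is correct and is exactly the route the paper takes: the paper gives no proof at all, merely stating that the proposition is a consequence of \cite[Theorem~3]{giga_domains_1985} together with the results of \cite{fujiwara_asymptotic_1969}, and your sketch simply unpacks how those two ingredients combine.
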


We assume that  $\Om$ is a smooth domain with holes. The boundary  $\partial\Om$ has a finite number of connected
components which are closed curves. We denote by $\Gamma$ the outer connected component and by
$\Ga_1,\dots,\Ga_N$ the inner connected components. In other words, we have that
$\partial\Om=\Ga\cup\Ga_1\cup\dots\cup\Ga_N$ where $\Ga,\Ga_1,\dots,\Ga_N$
are smooth closed curves and $\Ga_1,\dots,\Ga_N$ are located inside $\Ga$. We denote by $n$ the unit outer normal to $\partial\Om$.

We continue with a remark on the circulations of $v=u-\al\Delta u$ on each connected component of the boundary. The circulation of $v$ on $\Gamma_i$ is defined by $\int_{\Gamma_i}v\cdot n^\perp$.
\begin{lemma}\label{circulation}
Let $u$ be a sufficiently smooth solution of \eqref{alphaeuler}--\eqref{Dir}. Then for every $i\in\{1,\dots,n\}$ the circulation of $v=u-\al\Delta u$ on $\Gamma_i$ is conserved in time.   
\end{lemma}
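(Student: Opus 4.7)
The plan is to take the time derivative of the circulation inside the integral, substitute the momentum equation \eqref{alphaeuler} for $\dt v$ with $v=u-\al\Delta u$, and show that each of the three resulting boundary integrals vanishes. Writing $\tau=n^\perp$ for the unit tangent vector to $\Gamma_i$, I would show that
\begin{equation*}
\frac{d}{dt}\int_{\Gamma_i} v\cdot \tau\,ds=-\int_{\Gamma_i}\bigl(u\cdot\nabla v\bigr)\cdot\tau\,ds-\int_{\Gamma_i}\sum_j v_j(\nabla u_j\cdot\tau)\,ds-\int_{\Gamma_i}\nabla\pi\cdot\tau\,ds=0.
\end{equation*}

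For the first integral, the Dirichlet condition \eqref{Dir} gives $u=0$ on $\partial\Om$, so the vector $u\cdot\nabla v$ vanishes identically on $\Gamma_i$. For the second integral, I would use the standard observation that if a smooth function vanishes on a curve then its tangential derivative along that curve also vanishes: since $u_j\equiv 0$ on $\Gamma_i$ for each $j$, the tangential derivative $\tau\cdot\nabla u_j$ is zero on $\Gamma_i$, hence $\sum_j v_j(\nabla u_j\cdot\tau)=0$ there. For the third integral, $\nabla\pi\cdot\tau=\partial_\tau\pi$ is an exact one-form along the closed curve $\Gamma_i$, so its integral is zero.

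Combining these three facts and using the hypothesis that $u$ is sufficiently smooth (so that exchanging $\partial_t$ and the line integral, and taking boundary traces of $\dt v$, are justified), one obtains $\frac{d}{dt}\int_{\Gamma_i}v\cdot n^\perp\,ds=0$, i.e.\ the circulation of $v$ on each connected component $\Gamma_i$ is conserved. There is no real obstacle here: the computation is essentially a direct consequence of the Dirichlet boundary condition, and the only mild care required is to track signs/orientations of $n^\perp$ and to ensure the regularity of $u$ is enough to make the boundary traces of $\dt v$, $\nabla v$ and $\nabla u$ well defined, which is guaranteed by the "sufficiently smooth" assumption in the statement.
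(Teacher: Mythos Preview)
Your proof is correct and follows essentially the same approach as the paper: both substitute the equation for $\dt v$, drop the transport term $u\cdot\nabla v$ using the Dirichlet condition, kill the term $\sum_j v_j(\tau\cdot\nabla u_j)$ by noting that the tangential derivative of $u$ vanishes on $\Gamma_i$, and eliminate the pressure term as the integral of an exact form over a closed curve.
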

\begin{proof}
The vector field $v$ verifies the following PDE:
\begin{equation*}
  \dt v+u\cdot\nabla v+\sum_jv_j\nabla u_j=-\nabla \pi.
\end{equation*}
We multiply by $n^\perp$ and integrate on $\Gamma_i$. Recalling that $u$ vanishes on the boundary we get
\begin{equation*}
\dt\int_{\Gamma_i}v\cdot n^\perp+\int_{\Gamma_i}n^\perp \cdot\nabla u\cdot v=-\int_{\Gamma_i}n^\perp\cdot\nabla \pi.
\end{equation*}
Since $n^\perp \cdot\nabla$ is a tangential derivative and $u$ vanishes on the boundary we have that $n^\perp \cdot\nabla u=0$ at the boundary so the second term above vanishes. Finally, using that $n^\perp $ is the unit tangent vector field and recalling that $\Gamma_i$ is a closed curve we infer that the term on the right-hand side vanishes too. This completes the proof.
\end{proof}

We recall now some well-known facts about harmonic vector fields, we  refer to \cite{lopes_filho_vortex_2007,auchmuty_$l^2$_2001,auchmuty_bounds_2016} and the references therein for
details. We call harmonic vector field an $L^p$ vector field which is divergence free, curl free and tangent to the boundary. A harmonic vector field is smooth and the space of all harmonic vector fields is finite dimensional of dimension $N$. A harmonic vector field is uniquely determined by its circulations on $\Gamma_1,\dots,\Ga_N$. A basis of the space of harmonic vector fields is given by $\{Y_1,\dots,Y_N\}$ where $Y_i$ is the unique harmonic vector field with vanishing
circulation on all $\Gamma_1,\dots,\Ga_N$ except on $\Ga_i$ where the circulation
must be 1. 

If $f$ is a divergence free vector field tangent to the boundary we define $\wt f$ to be the unique vector field of the form
\begin{equation}\label{tilde}
\wt f=f-\sum_{i=1}^Na_iY_i  
\end{equation}
where the $a_i$ are constants and $\wt f$ has vanishing circulation on all $\Gamma_1,\dots,\Ga_N$. Equivalently, the constant $a_i$ is the circulation of $f$ on $\Gamma_i$.

We conclude this preliminary section with a Poincaré-like inequality.
\begin{lemma}\label{lemmapoincare}
Suppose that $f\in W^{1,p}$ is a divergence free vector field tangent to the
boundary such that  its circulation on each $\Gamma_1,\dots,\Ga_N$ vanishes. Then there exists some constant $C$ that depends only on $\Om$ and $p$ such that
\begin{equation}\label{poincarest}
\sob1pf\leq C\nl p{\curl f}.  
\end{equation}
\end{lemma}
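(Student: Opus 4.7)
The plan is to realize $f$ through a stream function. Since $f$ is divergence-free and tangent to $\partial\Om$, and has vanishing circulations on each $\Ga_i$, $f$ admits a single-valued stream function. The quickest route is to solve an auxiliary Dirichlet problem and then correct by a harmonic field.

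First I would let $\psi\in W^{1,p}_0(\Om)$ be the unique solution of the Poisson problem
\begin{equation*}
\Delta\psi=\curl f\quad\text{in }\Om,\qquad \psi\bigl|_{\partial\Om}=0.
\end{equation*}
By classical $L^p$ elliptic regularity on smooth domains (Agmon--Douglis--Nirenberg), $\psi\in W^{2,p}(\Om)$ with
\begin{equation*}
\|\psi\|_{W^{2,p}(\Om)}\leq C\nl p{\curl f}.
\end{equation*}
Set $g=\nabla^\perp\psi$. Then $\dive g=0$, $\curl g=\Delta\psi=\curl f$, and $g$ is tangent to $\partial\Om$ because $\psi$ is constant on each component of $\partial\Om$ (indeed zero). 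Moreover $\sob1pg\leq\|\psi\|_{W^{2,p}}\leq C\nl p{\curl f}$.

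Next I would analyze $h:=f-g$. By construction $h$ is divergence-free, tangent to $\partial\Om$, and curl-free, so $h$ is a harmonic vector field and can be written as $h=\sum_{i=1}^N\beta_iY_i$ with $\beta_i$ equal to the circulation of $h$ on $\Ga_i$. Since $f$ has vanishing circulations by hypothesis, $\beta_i=-\int_{\Ga_i}g\cdot n^\perp=-\int_{\Ga_i}\partial_n\psi\,ds$. The trace theorem and the $W^{2,p}$ bound give
\begin{equation*}
|\beta_i|\leq C\|\partial_n\psi\|_{L^p(\Ga_i)}\leq C\|\psi\|_{W^{2,p}(\Om)}\leq C\nl p{\curl f}.
\end{equation*}
Because the $Y_i$ are fixed smooth vector fields, $\sob1ph\leq C\sum_i|\beta_i|\leq C\nl p{\curl f}$.

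Combining $f=g+h$ yields \eqref{poincarest}. The only slightly delicate point is the trace estimate for $\partial_n\psi$ on each $\Ga_i$, but this is standard once $\psi\in W^{2,p}$. I do not see any serious obstacle; the argument is essentially a Biot--Savart type inversion made rigorous by $L^p$ elliptic regularity, with the circulation hypothesis used precisely to kill the harmonic component in an $L^p$-quantitative way.
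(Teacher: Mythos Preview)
Your argument is correct but proceeds quite differently from the paper. The paper uses a compactness--contradiction argument: it starts from the classical elliptic estimate $\sob1pf\leq C(\nl pf+\nl p{\curl f})$, then assumes that $\nl pf\leq C\nl p{\curl f}$ fails, extracts a normalized sequence with curl going to zero, and uses the compact embedding $W^{1,p}\hookrightarrow L^p$ to produce a limiting harmonic vector field with vanishing circulations, which must be zero --- a contradiction. Your proof is instead constructive: you invert the curl explicitly via the Dirichlet Laplacian, then quantify the harmonic remainder through trace estimates on $\partial_n\psi$. The paper's route is shorter and avoids tracking constants through the Poisson problem and the trace theorem, but it is non-constructive and gives no information about the constant $C$. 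Your approach, by contrast, makes the Biot--Savart structure explicit, shows precisely where the circulation hypothesis enters quantitatively, and in principle yields a constant one could estimate from elliptic and trace constants on $\Om$. Both proofs ultimately hinge on the finite-dimensionality and smoothness of the space of harmonic vector fields.
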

\begin{proof}
Since $f$ is divergence free and tangent to the boundary, we know from
classical elliptic estimates that the following inequality holds true:
\begin{equation}\label{foias}
\sob1pf\leq C(\nl pf+\nl p{\curl f})  
\end{equation}
so in order to prove \eqref{poincarest} it suffices to show that
\begin{equation}
  \label{poincare}
\nl pf\leq C\nl p{\curl f}.    
\end{equation}
Assume by absurd that \eqref{poincare} fails to be true. Then  \eqref{poincare} fails for $C=n$ so there exists a sequence $f_n$ of divergence free vector fields tangent to the boundary with vanishing circulation on each   $\Gamma_1,\dots,\Ga_N$ and such that
\begin{equation*}
\nl p{f_n}=1\quad\text{and}\quad \nl p{\curl f_n}\leq\frac1n.  
\end{equation*}
Using the estimate \eqref{foias} for $f_n$ we see that $f_n$ is bounded
in $W^{1,p}$. Using the compact embedding of $W^{1,p} $ into
$L^p $ we deduce that there exists a subsequence $f_{n_k}$ and
some $f\in W^{1,p} $ such that $f_{n_k} \to f$ weakly in
$W^{1,p} $ and strongly in $L^p $. In particular $\nl
pf=1$. Moreover, $f_{n_k}$ being divergence free, tangent to the
boundary with vanishing circulation on each  $\Gamma_i$ and the weak
convergence in $W^{1,p}$ imply that so is $f$. Moreover, since $\curl
f_{n_k}\to0$ we have that $f$ is also curl free. So $f$ is a harmonic vector field  with vanishing
circulation on each  $\Gamma_i$. This implies that $f=0$ which is a contradiction because $\nl pf=1$. This completes the proof.
\end{proof}

\section{Main estimate}
\label{sect3}

In this section we consider some vector field $u\in W^{3,p} $ which is divergence free and  vanishing on the boundary $\partial\Om$. We define $v=u-\al\Delta u$. The aim of this section is to estimate $u$ as best as possible in terms of $\nhua u$ and of $\nl p{\curl v}$ with constants independent of $\al$. To do that we will distinguish two parts in $u$: one part that comes from $\curl v$ and another part which comes from the circulations of $v$ on $\Ga_1,\dots,\Ga_N$.

We observe first that $v$ and $\PP v$ have the same circulation on each $\Gamma_i$. Indeed, the Leray decomposition says that $v$ and $\PP v$ differ by a gradient
\begin{equation}\label{leray}
v=\PP v+\nabla \pi'  
\end{equation}
so
\begin{equation*}
\int_{\Gamma_i}n^\perp\cdot v-\int_{\Gamma_i}n^\perp\cdot \PP v=\int_{\Gamma_i}n^\perp\cdot\nabla \pi'=0  
\end{equation*}
where we used that  $n^\perp $ is the unit tangent vector field and $\Gamma_i$ is a closed curve.

Let us define $\widetilde{\PP v}$ as in relation \eqref{tilde}, that is
\begin{equation}\label{pvt}
\wt {\PP v} =\PP v-\sum_{i=1}^N\gamma_iY_i  
\end{equation}
where $\gamma_i$ is the circulation of $v$ on $\Gamma_i$:
\begin{equation*}
\gamma_i= \int_{\Gamma_i}n^\perp\cdot v. 
\end{equation*}

Recall that the operator $1+\al\A$ is invertible. Let us introduce $\wt u=(1+\al\A)^{-1}\wt{\PP v}$ so that
\begin{equation}\label{defutilde1}
\wt u+\al\A\wt u=\wt{\PP v}.  
\end{equation}

Now, let us consider some scalar function $q\in L^p $. By the Biot-Savart law, there exists some divergence-free vector field $f\in W^{1,p}$ tangent to the boundary such that $\curl f=q$. Adding a suitable linear combination of $Y_i$, we can further assume that $f$ has vanishing circulation on each  $\Gamma_1,\dots,\Ga_N$. Moreover, $f$ is the unique vector field verifying all these properties. We denote $f=\S(q)$. This allows us to define the vector field $\T(q)=(1+\al\A)^{-1}f=(1+\al\A)^{-1}\S(q)$.

With the notation introduced above, we remark that if $q=\curl(u-\al\Delta u)$ then
\begin{equation*}
  \T(q)=\tu.
\end{equation*}
Indeed, we have that
\begin{equation*}
q=\curl (u-\al\Delta u)=\curl v=\curl \PP v=\curl \widetilde{\PP v} 
\end{equation*}
where we used that \eqref{leray}, \eqref{pvt} and the fact that the $Y_i$ are curl free. The vector field $\widetilde{\PP v} $ is divergence free, tangent to the boundary, has vanishing circulation on each  $\Gamma_1,\dots,\Ga_N$ and its curl is $q$. So $\widetilde{\PP v}=\S(q)$ and the definition of $\T(q)$ allows to conclude that $\T(q)=(1+\al\A)^{-1}\widetilde{\PP v}=\tu$.

We assume in the sequel that $q=\curl(u-\al\Delta u)$.

Let us apply the Leray projector $\PP$ to the relation $v=u-\al\Delta u$. We get $\PP v=u+\al\A u$ that is $u=(1+\al\A)^{-1}\PP v$. Let us apply $(1+\al\A)^{-1}$ to \eqref{pvt}. We obtain
\begin{equation}\label{XX}
u=(1+\al\A)^{-1}\PP v  =(1+\al\A)^{-1}\wt{\PP v}+  \sum_{i=1}^N(1+\al\A)^{-1}\gamma_iY_i=\T(q)  +  \sum_{i=1}^N\gamma_i\txi
\end{equation}
where we defined
\begin{equation*}
  \txi=(1+\al\A)^{-1}Y_i.
\end{equation*}

We will estimate separately the part due to the vorticity $q$, \textit{i.e.} $\T(q)$, and the part due to the circulations $\gamma_1,\dots,\gamma_N$, \textit{i.e.} $\sum_{i=1}^N\gamma_i\txi$.

We start by estimating $\T(q)$.
\begin{proposition}\label{mainprop}
Let $q\in L^p $. Then $\T(q)\in W^{3,p}$ with $\sob3p{\T(q)}\leq C(\al)\nl pq$. Moreover, for any $\ep>0$ there exists a constant $C$ that
depends only on $\Om$, $p$ and $\ep$ but not on $\al$ such that
\begin{equation}
  \label{eq:88}
\sob{\frac1p-\ep}p{\T(q)}\leq C\nl pq,
\end{equation}
\begin{equation}
  \label{eq:8}
\sob1p{\T(q)}\leq C\al^{-\frac12+\frac1{2p}-\ep}\nl pq,
\end{equation}
\begin{equation}\label{bound}
  \nhu{\T(q)}\leq C\al^{\min(-\frac1{2p},-\frac14)-\ep}\nl pq
\end{equation}
and
\begin{equation}\label{bound0}
\nh{\min(1-\frac1p,\frac12)-\ep}{\T(q)}\leq C \nl pq. 
\end{equation}
\end{proposition}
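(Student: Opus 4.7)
The plan is to exploit the representation $\T(q) = (1+\al\A)^{-1} f$ with $f = \S(q)$. Since $f$ is divergence-free, tangent to the boundary and has vanishing circulations, Lemma \ref{lemmapoincare} gives $\sob{1}{p}{f} \leq C \nl{p}{q}$. The bound $\sob{3}{p}{\T(q)} \leq C(\al) \nl{p}{q}$ (with $\al$-dependent constant) then follows from standard elliptic regularity for the stationary Stokes problem $(1+\al\A)\T(q) = f$ with $f \in W^{1,p}$.

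The four uniform-in-$\al$ bounds rely on the operator-norm estimate
\begin{equation*}
\bigl\|\A^\gamma (1+\al\A)^{-1}\bigr\|_{L^p_\sigma \to L^p_\sigma} \leq C_\gamma \al^{-\gamma},\qquad \gamma\in[0,1],
\end{equation*}
which is a classical consequence of the sectoriality of $\A$ asserted in Theorem \ref{analyticity} (equivalently, apply $\|\A^\gamma(\la+\A)^{-1}\|\leq C|\la|^{\gamma-1}$ with $\la=1/\al$). Combined with Proposition \ref{domain}, we write, for $0\leq\beta\leq\gamma\leq\beta+1$,
\begin{equation*}
\A^\gamma \T(q) = \A^{\gamma-\beta}(1+\al\A)^{-1}\A^\beta f,\qquad \nl{p}{\A^\gamma \T(q)} \leq C\al^{-(\gamma-\beta)}\nl{p}{\A^\beta f},
\end{equation*}
and whenever $\beta<1/(2p)$ the characterisation in Proposition \ref{domain} further gives $\nl{p}{\A^\beta f}\leq C\sob{2\beta}{p}{f}\leq C\nl{p}{q}$.

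Taking $\gamma=\beta$ slightly below $1/(2p)$ yields \eqref{eq:88}. Taking $\gamma=1/2$ and $\beta=1/(2p)-\ep$ yields \eqref{eq:8}, via the embedding $D(\A^{1/2})\hookrightarrow W^{1,p}$ from Proposition \ref{domain}. For the two $H^s$-bounds \eqref{bound} and \eqref{bound0} we split into cases. When $p\geq 2$, $W^{1,p}\hookrightarrow H^1$, so $f\in H^1$ with $\nhu{f}\leq C\nl{p}{q}$; we switch to the $L^2$ Stokes operator $\AA$, noting that for $\beta<1/4$ the space $D(\AA^\beta)$ only requires tangency (not vanishing) on $\partial\Om$, hence $f\in D(\AA^\beta)$ with $\nl{2}{\AA^\beta f}\leq C\nhu{f}\leq C\nl{p}{q}$. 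The same commutation identity in $L^2$ then delivers $\nh{s}{\T(q)}\leq C\nl{p}{q}$ for every $s<1/2$ (choose $\gamma=\beta<1/4$) and $\nhu{\T(q)}\leq C\al^{-1/4-\ep}\nl{p}{q}$ (choose $\gamma=1/2$, $\beta=1/4-\ep$). When $p<2$ we stay in $L^p$: with $\gamma=1/p+\ep/2$ and $\beta=1/(2p)-\ep/2$ we obtain $\nl{p}{\A^\gamma \T(q)}\leq C\al^{-1/(2p)-\ep}\nl{p}{q}$, and the two-dimensional Sobolev embedding $W^{2/p+\ep,p}\hookrightarrow H^1$ (valid since the Sobolev index is $\ep>0$) concludes \eqref{bound}; for \eqref{bound0} in this regime we simply apply $W^{s,p}\hookrightarrow H^{s'}$ with $s'<s+1-2/p$ to \eqref{eq:88}, which gives $\nh{s'}{\T(q)}\leq C\nl{p}{q}$ for $s'<1-1/p$.

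The main obstacle is \eqref{bound} when $p>2$: the naive $L^p$ estimate \eqref{eq:8} only produces $\al^{-1/2+1/(2p)}$, which is strictly worse than the target $\al^{-1/4}$ as soon as $p>2$. The cure is to move to the $L^2$ Stokes operator and exploit the ``no-boundary-condition window'' $D(\AA^\beta)$, $\beta<1/4$; this requires knowing that $f$ is in $H^1$, which is precisely the Sobolev embedding $W^{1,p}\hookrightarrow H^1$ available for $p\geq 2$. A symmetric issue explains why \eqref{bound0} takes the form $\min(1-1/p,1/2)$: for $p>2$ the $L^p$-to-$H^{s'}$ Sobolev embedding cannot gain derivatives (since $L^p\subset L^2$ is already a trivial inclusion), so $1/2$ is the best that the $L^2$ route produces.
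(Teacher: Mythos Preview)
Your proof is correct and follows essentially the same route as the paper: both start from $f=\S(q)$ and Lemma~\ref{lemmapoincare}, use sectoriality of the Stokes operator together with the characterisation of $D(\A^{s/2})$ (Proposition~\ref{domain}) to obtain the $\al$-uniform bounds, and handle the $H^s$ estimates by passing to the $L^2$ setting when $p\geq2$. The only cosmetic difference is that you package the key estimate as the single operator bound $\|\A^\gamma(1+\al\A)^{-1}\|\leq C\al^{-\gamma}$, whereas the paper establishes the two endpoints $\|\A^{s/2}\T(q)\|_{L^p}$ and $\al\|\A^{1+s/2}\T(q)\|_{L^p}$ and then interpolates in the $W^{s',p}$ scale; these are equivalent formulations.
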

\begin{proof}
Let $f=\S(q)$, \textit{i.e.} $f$ is the unique vector field which is  divergence-free, tangent to the boundary, with vanishing circulation on each  $\Gamma_1,\dots,\Ga_N$ and such that $\curl f=q$. Lemma \ref{lemmapoincare} implies that
\begin{equation}\label{pvbound}
\sob1p{f}\leq C\nl pq.  
\end{equation}

Recall that $\T(q)=(1+\al\A)^{-1}f$. Since $f\in W^{1,p} $ we deduce from the classical regularity results for the (elliptic) Stokes operator (see \cite{cattabriga_su_1961}) that $\T(q)\in W^{3,p}$. In addition, we have the bound $\sob3p{\T(q)}\leq C(\al)\sob1pf\leq C(\al)\nl pq$.

Let $s\in (0,\frac1p)$ (the value of $s$ will be chosen later). We
deduce from Proposition \ref{domain} that $f\in D(\A^{\frac s2})$ and moreover
\begin{equation*}
\nl p{\A^{\frac s2} f}\leq C  \sob sp{ f}\leq C\sob1p{ f}\leq C\nl pq.  
\end{equation*}

Recall that  $\T(q)=(1+\al\A)^{-1} f$ so 
\begin{equation}\label{defutilde}
\T(q)+\al\A \T(q)= f.  
\end{equation}
Clearly $ \T(q)\in D(\A^{\frac s2})$ and since  $ f$ also belongs to  $D(\A^{\frac s2})$  we infer that $\A \T(q)\in D(\A^{\frac s2})$. We can therefore apply the operator $\A^{\frac s2}$ to relation \eqref{defutilde} to obtain that
\begin{equation*}
\A^{\frac s2} \T(q)+\al\A\A^{\frac s2} \T(q)=\A^{\frac s2} f.    
\end{equation*}
We used above that the operators $\A$ and $\A^{\frac s2}$ commute. 

We will use now the analyticity of the Stokes semi-group stated in
Theorem \ref{analyticity} to deduce that
\begin{equation}\label{ffff}
\nl p{\A^{\frac s2} \T(q)}+\al \nl p{\A^{1+\frac s2} \T(q)} \leq \nl p{\A^{\frac s2} f} \leq C\nl pq.  
\end{equation}

According to Proposition \ref{domain} we have the embedding $D(\A^{\frac s2})\hookrightarrow W^{s,p} $  so we can further deduce that
\begin{equation*}
\sob sp{ \T(q)}\leq C \nl p{\A^{\frac s2} \T(q)}\leq C\nl pq. 
\end{equation*}
This proves relation \eqref{eq:88}.

Next, let us observe that we have the following estimate: 
$$\sob {2+s}p{ \T(q)}\leq C\nl p{\A^{1+\frac s2} \T(q)}.$$ 
Indeed, if $\T(q)\in D(\A^{1+\frac s2})$ then $\A\T(q)\in D(\A^{\frac s2})$ so Proposition \ref{domain} implies again that $\A\T(q)\in W^{s,p}$. The classical regularity results for the Stokes operator (see \cite{cattabriga_su_1961}) imply then that $\T(q)\in W^{2+s,p}$ with the required inequalities. Therefore relation \eqref{ffff} also yields
\begin{equation*}
\sob {2+s}p{ \T(q)}\leq  \frac{C}{\al}\nl pq. 
\end{equation*}
This proves relation \eqref{eq:88}. Next, we infer by interpolation that 
\begin{equation}\label{10}
\sob {s'}p{ \T(q)}\leq C \sob sp{ \T(q)}^{1-\frac{s'-s}2}\sob {2+s}p{ \T(q)}^{\frac{s'-s}2}  \leq  C\al^{\frac{s-s'}2} \nl pq 
\end{equation}
for all $s'\in[s,2+s]$. 

We first choose $s'=1$ and $s=\frac1p-2\ep$ in \eqref{10} and we get \eqref{eq:8}. 

We prove now the bound \eqref{bound0}. If $p\leq 2$ this relation follows from \eqref{eq:88}  and from the Sobolev embedding $W^{\frac1p-\ep,p}\hra H^{1-\frac1p-\ep}$ (for $\ep$ sufficiently small). If $p\geq2$ we have that $q\in L^p\hra L^2$ and \eqref{bound0} follows from  the relation \eqref{eq:88} written for $p=2$.

Finally, let us prove \eqref{bound}. Assume first  $p\leq 2$. Choose $s'=2/p$ in \eqref{10} (which is
possible because $s<1/p$). Recalling the Sobolev embedding
$W^{\frac2p,p} \hookrightarrow H^1 $ we deduce that
\begin{equation*}
\nh1{ \T(q)}\leq C\sob {\frac2p} p { \T(q)} \leq  C\al^{\frac s2 -\frac1p} \nl pq.
\end{equation*}
Choosing $s=\frac1p-2\ep$ implies \eqref{bound}. The case $p>2$ follows from the case $p=2$ since if $q$ belongs to $L^p$ then it also belongs to $L^2$ so one can use \eqref{eq:8} for $p=2$. This completes the proof.
\end{proof}

We continue with the estimate of the part of $u$ due to the circulations $\gamma_1,\dots,\gamma_N$.
\begin{proposition}\label{propcirc}
Let $u\in W^{3,p}$ be divergence free and tangent to the boundary. There exists a constant $C$ that depends only on $\Om$ and $p$ such that
\begin{equation*}
\sum_{i=1}^N|\gamma_i|\leq C(\nhua u+\nl pq).  
\end{equation*}
\end{proposition}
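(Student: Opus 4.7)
The plan is to represent each circulation $\gamma_i$ as an interior integral via a Stokes-type identity, and then bound that integral using $\nhua u$ and $\nl p q$. The key is to choose the test function so cleverly that all boundary contributions disappear after integration by parts.

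Concretely, I would first select $\phi_i\in C^\infty(\overline\Om)$ which equals the constant $\delta_{ij}$ in a full neighborhood of each connected component $\Gamma_j$ of $\partial\Om$; this is possible because the $\Gamma_j$ are disjoint closed curves, e.g.\ via a partition of unity supported in disjoint collars around $\Gamma,\Gamma_1,\dots,\Ga_N$. By construction, $\nabla\phi_i$ vanishes in a neighborhood of $\partial\Om$. Applying the 2D divergence theorem to the vector field $(\phi_i v_2,-\phi_i v_1)$ and using $\curl v=q$ yields
\begin{equation*}
\gamma_i=\int_{\partial\Om}\phi_i\,(v\cdot n^\perp)\,d\sigma=\int_\Om\phi_i\,q\,dx+\int_\Om v\cdot\nabla^\perp\phi_i\,dx,
\end{equation*}
where $\nabla^\perp\phi_i=(-\partial_2\phi_i,\partial_1\phi_i)$.

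Next I would substitute $v=u-\al\Delta u$ in the second integral and integrate by parts on the Laplacian term. Since $u=0$ on $\partial\Om$ (by hypothesis) and $\nabla^\perp\phi_i=0$ in a neighborhood of $\partial\Om$ (by the choice of $\phi_i$), all boundary contributions vanish, leaving
\begin{equation*}
\int_\Om v\cdot\nabla^\perp\phi_i\,dx=\int_\Om u\cdot\nabla^\perp\phi_i\,dx+\al\int_\Om\nabla u:\nabla(\nabla^\perp\phi_i)\,dx.
\end{equation*}
Since $\phi_i$ depends only on $\Om$, Hölder's inequality then bounds $\bigl|\int\phi_i q\bigr|\leq C\nl p q$, $\bigl|\int u\cdot\nabla^\perp\phi_i\bigr|\leq C\nl 2 u\leq C\nhua u$, and $\al\bigl|\int\nabla u:\nabla(\nabla^\perp\phi_i)\bigr|\leq C\al\nl 2{\nabla u}=C\al^{1/2}\bigl(\al^{1/2}\nl 2{\nabla u}\bigr)\leq C\al^{1/2}\nhua u$. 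For $\al$ bounded, which is the regime of interest since the paper concerns the limit $\al\to 0$, we obtain $|\gamma_i|\leq C(\nhua u+\nl p q)$, and summing over $i$ proves the claim.

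The only nontrivial point is the construction of $\phi_i$: what matters is not merely the boundary values $\phi_i=\delta_{ij}$ on $\Gamma_j$, but that $\nabla\phi_i$ actually vanishes in a full neighborhood of $\partial\Om$, so that the integration by parts of $\Delta u\cdot\nabla^\perp\phi_i$ produces no residual boundary term---which is precisely where a naive choice would fail, since a boundary trace like $\int_{\partial\Om}\partial_n u\cdot\nabla^\perp\phi_i$ is not controlled by $\nhua u$. Once this is done, the rest is a routine application of Hölder's inequality, and no spectral or resolvent machinery for the Stokes operator is required.
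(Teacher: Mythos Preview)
Your argument is correct, and it is a genuinely different---and considerably more elementary---route than the one taken in the paper. The paper works entirely inside the Stokes-operator framework: it decomposes $u=\tu+g$ with $g=\sum_i\gamma_i\txi$, estimates $\nhua{\tu}$ by multiplying the resolvent equation \eqref{defutilde1} by $\tu$, deduces a bound on $\nhua g$, and then applies $\A^{-1/2}$ to the identity $g+\al\A g=X=\sum_i\gamma_i Y_i$ together with the Poincar\'e inequality to control $\nl2{\A^{-1/2}X}$; the conclusion follows because $(a_1,\dots,a_N)\mapsto\nl2{\A^{-1/2}\sum_i a_iY_i}$ is a norm on $\R^N$. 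By contrast, you bypass the Stokes operator altogether: a single cutoff $\phi_i$, locally constant near $\partial\Om$, turns the boundary circulation into an interior integral via the divergence theorem, and the $\al\Delta u$ contribution is handled by one integration by parts, which is boundary-free precisely because $\nabla\phi_i$ is compactly supported in $\Om$. Your approach is shorter and uses nothing beyond H\"older's inequality; the paper's approach has the virtue of living in the same $\A$-functional calculus that drives the rest of Section~\ref{sect3}, so the auxiliary objects $\tu$, $\txi$ and the norm $\nl2{\A^{-1/2}\cdot}$ are already at hand. Both proofs implicitly use that $\al$ is bounded (the paper absorbs an $\al^{1/2}$ factor in exactly the same way in relation \eqref{X}), so your caveat at the end is consistent with the paper's standing assumptions.
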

\begin{proof}
Let us denote
\begin{equation*}
g=  u-\wt u= \sum_{i=1}^N\gamma_i\txi
\end{equation*}
and
\begin{equation*}
  X=\sum_{i=1}^N\gamma_i Y_i
\end{equation*}
so that 
\begin{equation}\label{eqg}
  g+\al\A g=X.
\end{equation}

Now we estimate the  $H^1_\al$ norm of $g$. This  requires to estimate the
$H^1_\al$ norm of $\wt u$. To do that, let us multiply
\eqref{defutilde1} by $\wt u$ and integrate. We obtain
\begin{equation}\label{h1}
\nl2{\wt u}^2+\al\int_\Om \A\wt u\cdot\wt u=\int_\Om\wt{\PP v}\cdot\wt u.  
\end{equation}
We have that
\begin{equation*}
\int_\Om \A\wt u\cdot\wt u
= \int_\Om \A^{\frac12}\wt u\cdot\A^{\frac12}\wt u
=\nl2{\A^{\frac12}\wt u}^2
= \nl2{\nabla \wt u}^2
\end{equation*}
by the usual properties of the Stokes operator. We infer from
\eqref{h1} that
\begin{equation*}
\nhua{\wt u}^2 =\int_\Om\wt{\PP v}\cdot\wt u\leq \nl2 {\wt{\PP v}}
\nl2{\wt u}
\leq \nl2 {\wt{\PP v}} \nhua{\wt u}
\end{equation*}
so that
\begin{equation*}
  \nhua{\wt u}\leq \nl2 {\wt{\PP v}}.
\end{equation*}
The Sobolev embedding $W^{1,p}(\Om)\hookrightarrow L^2(\Om)$ together with the
bound given in \eqref{pvbound} (recall that $f=\widetilde{\PP v}$) imply
\begin{equation*}
  \nhua{\wt u}\leq \nl2 {\wt{\PP v}}\leq C\sob1p{\wt{\PP v}}\leq C\nl pq.  
\end{equation*}

In the end we get
\begin{equation*}
\nhua{g}=\nhua{u-\wt u}\leq \nhua u+\nhua{\wt u}\leq   \nhua u+ C\nl pq.
\end{equation*}
This implies
\begin{equation}\label{g1}
\nl2g\leq  \nhua u+ C\nl pq 
\end{equation}
and 
\begin{equation}\label{g2}
\nl2{\A^{\frac12}g}=\nl2{\nabla g}\leq  \al^{-\frac12}\nhua{g}\leq  \al^{-\frac12}(\nhua u+ C\nl pq) .
\end{equation}

We apply now $\A^{-\frac12}$ to \eqref{eqg}  and we take the $L^2$
norm to obtain
\begin{equation}\label{g4}
\nl2{\A^{-\frac12}X}=\nl2{\A^{-\frac12}g+\al\A^{\frac12}g}  
\leq \nl2{\A^{-\frac12}g}+\al\nl2{\A^{\frac12}g}.  
\end{equation}

Since $g=(1+\al\A)^{-1}X$ we have that $g\in D(\A)$. So $\A^{-\frac12}g\in D(\A^{\frac12})$ and from Proposition \ref{domain} we deduce that $\A^{-\frac12}g$ vanishes on the boundary. Therefore we can apply the Poincaré inequality to deduce that
\begin{equation}\label{g3}
\nl2{\A^{-\frac12}g}\leq C\nl2{\nabla \A^{-\frac12}g}=C\nl2{\A^{\frac12}\A^{-\frac12}g}=C\nl2g.  
\end{equation}

Using relations \eqref{g1}, \eqref{g2} and \eqref{g3} in \eqref{g4} yields
\begin{equation}\label{X}
\nl2{\A^{-\frac12}X}\leq  C( \nhua u+ \nl pq) + \al^{\frac12}(\nhua u+
C\nl pq) \leq  C( \nhua u+ \nl pq) .
\end{equation}

Because the vector fields $Y_1,\dots,Y_N$ are linearly independent,
one can easily check that the application
\begin{equation*}
  \R^N\ni(a_1,\dots,a_N)\mapsto \nl2{\A^{-\frac12}(\sum_{i=1}^N a_i Y_i)}
\end{equation*}
is a norm on $\R^N$. Because all norms on $\R^N$ are equivalent, there
exists some constant $C$ such that
\begin{equation*}
\sum_{i=1}^N |\gamma_i|\leq  C\nl2{\A^{-\frac12}(\sum_{i=1}^N \gamma_i Y_i)}  
=C\nl2{\A^{-\frac12}X}\leq  C( \nhua u+ \nl pq)
\end{equation*}
where we used \eqref{X}. This completes the proof.
\end{proof}

Putting together Propositions \ref{mainprop} and \ref{propcirc} allows to estimate the full velocity.
\begin{proposition}\label{propvelo}
Let $u\in W^{3,p} $ be divergence free and vanishing on the boundary and let $\ep>0$. There exists a constant $C$ that
depends only on $\Om$, $p$ and $\ep$ but not on $\al$ such that
\begin{equation}
  \label{eq:88u}
\sob{\frac1p-\ep}p{u}\leq C(\nl pq+\nhua u),
\end{equation}
\begin{equation}
  \label{eq:8u}
\sob1p{u}\leq C\al^{-\frac12+\frac1{2p}-\ep}(\nl pq+\nhua u),
\end{equation}
\begin{equation}\label{boundu}
  \nhu{u}\leq C\al^{\min(-\frac1{2p},-\frac14)-\ep}(\nl pq+\nhua u)
\end{equation}
and
\begin{equation}\label{bound0u}
\nh{\min(1-\frac1p,\frac12)-\ep}{u}\leq C (\nl pq+\nhua u). 
\end{equation}  
\end{proposition}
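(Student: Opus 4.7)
The plan is to invoke the orthogonal-type decomposition \eqref{XX}, namely $u=\T(q)+\sum_{i=1}^{N}\gamma_{i}\txi$, and to estimate the two contributions separately using the propositions already at hand.

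The first term is handled directly by Proposition \ref{mainprop}, which produces, for each of the four norms appearing in the conclusion, a bound of the form $\sob{s}{p}{\T(q)}\leq C\al^{\kappa(s)}\nl p q$ (with $\kappa(s)$ the appropriate negative power of $\al$, or $0$). The second term's coefficients are controlled by Proposition \ref{propcirc}, which gives $\sum_{i=1}^{N}|\gamma_{i}|\leq C(\nhua u+\nl p q)$, so what remains is to bound $\sob{s}{p}{\txi}$ in each of the four norms, uniformly in $\al$ modulo the same $\al$-powers that arise in Proposition \ref{mainprop}.

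For that, I would observe that $\txi=(1+\al\A)^{-1}Y_{i}$, with $Y_{i}$ a \emph{fixed} smooth harmonic vector field (divergence-free, tangent to the boundary). The entire proof of Proposition \ref{mainprop} can be replayed verbatim with $f=\S(q)$ replaced by $Y_{i}$: we have $Y_{i}\in D(\A^{s/2})$ for every $s<\frac{1}{p}$ by Proposition \ref{domain}, and $\nl p{\A^{s/2}Y_{i}}\leq C\sob{1}{p}{Y_{i}}$ is a constant depending only on $\Om$. Applying $\A^{s/2}$ to $\txi+\al\A\txi=Y_{i}$ and using the analyticity bound of Theorem \ref{analyticity} followed by Cattabriga regularity gives $\sob{s}{p}{\txi}\leq C$ and $\sob{2+s}{p}{\txi}\leq C/\al$; the interpolation step of Proposition \ref{mainprop} then yields the same four inequalities with $\nl p q$ replaced by the constant $1$.

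Finally, for any of the four target inequalities, one writes, e.g.,
\begin{equation*}
\sob{\frac{1}{p}-\ep}{p}{u}\leq \sob{\frac{1}{p}-\ep}{p}{\T(q)}+\sum_{i=1}^{N}|\gamma_{i}|\,\sob{\frac{1}{p}-\ep}{p}{\txi}\leq C\nl p q+C(\nhua u+\nl p q),
\end{equation*}
and the other three estimates \eqref{eq:8u}, \eqref{boundu}, \eqref{bound0u} follow in the exact same way with the matching $\al$-power in front. I do not foresee a real obstacle here: all the analytical difficulty has been absorbed into Propositions \ref{mainprop} and \ref{propcirc}, and the only new content is the remark that $\txi$ obeys the same $\al$-dependent estimates as $\T(q)$, which follows by literally the same proof applied to the smooth datum $Y_{i}$ instead of $\S(q)$.
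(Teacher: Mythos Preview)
Your proposal is correct and follows essentially the same route as the paper: decompose $u$ via \eqref{XX}, apply Proposition \ref{mainprop} to $\T(q)$ and Proposition \ref{propcirc} to bound the $\gamma_i$, and then observe that the estimates for each $\txi$ follow by rerunning the proof of Proposition \ref{mainprop} with the smooth fixed datum $Y_i$ in place of $\S(q)$. The paper's own proof is exactly this, stated more tersely.
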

\begin{remark}
The important thing to observe is that the power of $\al$ in
\eqref{boundu} can be made strictly larger than $-\frac12$ (even if it is only slightly larger than $-\frac12$ when $p$ is close to 1). The
significance of this will be obvious later in Section~\ref{sect5}. Indeed, we will need to
show that the terms of the form $\al \nabla u\nabla u$ converge to 0 as
$\al\to0$ when $\nhua u$ and $\nl pq$ are bounded. The trivial bound $\nh1u\leq\al^{-\frac12}\nhua
u$only shows that $\al \nabla u\nabla u$ is bounded while
\eqref{boundu} with a power of $\al$  strictly larger than $-\frac12$
 implies that $\al \nabla u\nabla u$ goes to 0.
\end{remark}
\begin{proof}
We already know from Proposition \ref{mainprop} that relations \eqref{eq:88u}, \eqref{eq:8u}, \eqref{boundu} and \eqref{bound0u} hold true with $u$ replaced by $\tu$ on the left-hand side. It remains to show these relations with $u$ replaced by $\sum_{i=1}^N \gamma_i\txi$ on the  left-hand side. Thanks to Proposition \ref{propcirc} it suffices to show that
\begin{gather*}
\sob{\frac1p-\ep}p{\txi}\leq C,\quad
\sob1p{\txi}\leq C\al^{-\frac12+\frac1{2p}-\ep},\quad
\nhu{\txi}\leq C\al^{\min(-\frac1{2p},-\frac14)-\ep}\\
\intertext{and}
\nh{\min(1-\frac1p,\frac12)-\ep}{\txi}\leq C.
\end{gather*}
Since the vector fields $Y_i$ are smooth, tangent to the boundary and independent of $\al$, these bounds can be proved exactly as in the proof of Proposition \ref{mainprop} by reasoning on $Y_i$ instead of $f$.
\end{proof}

\section{Construction of the solution for fixed $\alpha$}
\label{sect4}

In this section the parameter $\al$ is fixed and the constants are allowed to depend on $\al$.

Our aim in this part is to prove the existence part of Theorem \ref{mainthm}. More precisely, we will show the following result.
\begin{theorem}\label{existence}
Suppose that $u_0\in W^{3,p}$ is divergence free and vanishing on the boundary. There exists a unique global $W^{3,p}$ solution  $u\in C^{0,w}_{b}(\R_+;W^{3,p})$ of \eqref{alphaeuler}--\eqref{Dir}. 
\end{theorem}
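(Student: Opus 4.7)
My plan reduces the equation to a scalar transport equation for the vorticity coupled with the elliptic reconstruction of Section \ref{sect3}, and builds the solution by smooth approximation using the $L^p$ conservation of vorticity.

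Introduce the vorticity $q=\curl(u-\al\Delta u)$. Taking $\curl$ of \eqref{alphaeuler} in two dimensions yields, after cancellations using $\dive u=0$, the transport equation
\begin{equation*}
\dt q+u\cdot\nabla q=0,\qquad q\bigl|_{t=0}=q_0:=\curl(u_0-\al\Delta u_0)\in L^p.
\end{equation*}
By Lemma \ref{circulation} the circulations $\gamma_i^0$ of $v=u-\al\Delta u$ on $\Ga_i$ are preserved in time, and the reconstruction \eqref{XX} gives $u(t)=\T(q(t))+\sum_{i=1}^N\gamma_i^0\,\txi$. For $\al$ fixed, Proposition \ref{mainprop} yields $\T\colon L^p\to W^{3,p}$ bounded; combined with the two-dimensional Sobolev embedding $W^{3,p}\hookrightarrow C^{1,\beta}(\overline\Om)$ (valid for $p>1$), the reconstructed velocity is automatically Lipschitz with norm controlled by $\nl pq+\sum_i|\gamma_i^0|$.

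For existence I would approximate by smooth data: choose $q_0^{(m)}\in C^\infty$ with $q_0^{(m)}\to q_0$ in $L^p$ and define $u_0^{(m)}=\T(q_0^{(m)})+\sum_i\gamma_i^0\,\txi$, which is smooth, divergence free and vanishes on $\partial\Om$. Standard local-existence theory (Picard iteration in a sufficiently high-order Sobolev space, or a Galerkin scheme) produces a local-in-time smooth solution $u^{(m)}$ of \eqref{alphaeuler}--\eqref{Dir}. The vorticity transport identity $\nl p{q^{(m)}(t)}=\nl p{q_0^{(m)}}$, valid along the volume-preserving Lipschitz flow of $u^{(m)}$, together with the reconstruction bound gives
\begin{equation*}
\sob3p{u^{(m)}(t)}\leq C(\al)\Bigl(\nl p{q_0^{(m)}}+\sum_{i=1}^N|\gamma_i^0|\Bigr)
\end{equation*}
uniformly in $t\geq 0$ and $m$. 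This precludes blow-up, so $u^{(m)}$ is global.

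Combined with a $W^{-1,p}$-type bound on $\dt u^{(m)}$ read off from the equation, Aubin--Lions on finite intervals extracts a subsequence converging strongly in $L^\infty_{loc}(\R_+;W^{1,\infty})$ to some $u\in L^\infty(\R_+;W^{3,p})$; this strong convergence suffices to pass to the limit in the nonlinear terms of \eqref{alphaeuler}, and weak continuity in $W^{3,p}$ follows from the uniform bound together with strong continuity in a weaker norm. For uniqueness, the vorticity lies only in $L^p$, so an Eulerian energy estimate on $q_1-q_2$ is unavailable; I would instead work Lagrangianly, using that both $u_1,u_2$ are Lipschitz so their flows $X_j$ are well defined and $q_j(t,\cdot)=q_0\circ X_j^{-1}(t,\cdot)$. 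A Gronwall estimate in the sup norm of $X_1-X_2$, closed via the $L^p$-continuity of translations applied to $q_0$ and the boundedness of $\T\colon L^p\to C^{1,\beta}$, forces $X_1=X_2$, hence $u_1=u_2$. This Lagrangian uniqueness step is the most delicate part of the argument, as the nonlinear loop $X\mapsto q\mapsto u\mapsto X$ must be closed without access to any spatial derivative of $q$.
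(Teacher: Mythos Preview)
Your existence strategy is sound and close in spirit to the paper's, but the regularization differs: the paper adds an artificial viscosity $-\ep\Delta q^\ep$ to the vorticity equation with Dirichlet condition $q^\ep\bigl|_{\partial\Om}=0$, obtains global smooth solutions directly from parabolic theory, and lets $\ep\to0$. Your route---smooth the initial vorticity and invoke local smooth existence for \eqref{alphaeuler}---is equally legitimate once the invocation is justified; bear in mind, though, that the paper explicitly notes that prior existence results for \eqref{alphaeuler}--\eqref{Dir} were only stated on simply-connected domains, so ``standard local-existence theory'' deserves a line of argument rather than a citation. The paper's parabolic regularization is fully self-contained in this respect and sidesteps the issue.

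On uniqueness you are making life harder than necessary. The paper does not attempt any estimate on $q_1-q_2$, and no Lagrangian argument is needed. Since $u_j\in W^{3,p}\hookrightarrow W^{1,\infty}$, one simply subtracts the two copies of \eqref{alphaeuler}, multiplies by $u_1-u_2$, and integrates: every nonlinear term is controlled by $\nhua{u_1-u_2}^2$ times the Lipschitz norms of the $u_j$, which gives $\dt\nhua{u_1-u_2}^2\leq C\nhua{u_1-u_2}^2$ and hence uniqueness by Gronwall. The point is that the energy estimate is performed on the velocity in $H^1_\al$, not on the vorticity; the $L^p$ regularity of $q$ plays no role in this step. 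Your Lagrangian closure would also work, but it is the more delicate option, not the only one.
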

Above $C^{0,w}_{b}$ stands for weakly continuous bounded functions.

We now proceed with the proof of Theorem \ref{existence}.

\medskip

The uniqueness part of this theorem is quite easy once we observe that, by the Sobolev embedding $W^{3,p}\hookrightarrow W^{1,\infty}$, the solution is Lipschitz. One can subtract the PDEs for two solutions and multiply by the difference of the solutions to observe that one can estimate the $H^1_\al$ norm of the difference and conclude by the Gronwall inequality. The argument is quite standard so we leave the details to the reader.

\medskip

To prove the existence of the solution, we will first find an equivalent formulation of the equations. Let $\gamma_i$ be the circulation of $v_0=u_0-\al\Delta u_0$ on $\Gamma_i$. We know from Lemma \ref{circulation} that $\gamma_i$ is conserved in time. 

Taking the curl of \eqref{alphaeuler} implies that the vorticity $q=\curl(u-\al\Delta u)$ verifies the following transport equation:
\begin{equation}\label{eqq}
  \dt q+u\cdot\nabla q=0.
\end{equation}

Conversely, if \eqref{eqq} holds true and the circulations $\gamma_i$ are conserved then \eqref{alphaeuler} holds true. Indeed, let $F$ denote the left-hand side of \eqref{alphaeuler}. The fact that \eqref{eqq} holds true means that $\curl F=0$. Going back to the proof of Lemma \ref{circulation} we observe that  the circulations $\gamma_i$ being conserved means that the circulations of $F$ on  each $\Gamma_1,\dots,\Ga_N$ vanish. From the properties of the Leray projector we know that $\PP F$ and $F$ differ by a gradient: $\PP F-F=\nabla\pi$. Taking the curl implies that  $\PP f$ is curl free. But $\PP F$ is also divergence free and tangent to the boundary so it must be a harmonic vector field. Since the circulations of  $F$ on $\Gamma_i$ vanish and recalling that a gradient has vanishing circulation on $\Gamma_i$ we deduce that  $\PP v$ has vanishing circulation on each $\Ga_1,\dots,\Ga_N$. Since it is a harmonic vector field it must therefore vanish. We conclude that $F=-\nabla\pi$ and \eqref{alphaeuler} holds true.

Recalling \eqref{XX} we infer that \eqref{alphaeuler}--\eqref{Dir} is equivalent to the following PDE in the unknown $q$:
\begin{equation}
  \label{eqqm}
\dt q+u\cdot\nabla q=0\quad\text{with}\quad u=\T(q)+\sum_{i=1}^N\gamma_i\txi.  
\end{equation}

To complete the proof of Theorem \ref{existence} it suffices to construct a solution $q\in L^\infty(\R_+;L^p)$ of \eqref{eqqm}. Indeed, by the regularity results for the Stokes operator and recalling that $Y_i$ is smooth we deduce that $\txi$ is smooth too. From Proposition \ref{mainprop} we deduce that $\sob3p\tu\leq C\nl pq$. Therefore $u\in L^\infty(\R_+;W^{3,p})$. From the PDE verified by $q$ one can immediately see that $\dt q$ is bounded in the sense of distributions, so $q$ must be continuous in time with values in $\mathscr{D}'$. Since $q\in L^\infty(\R_+;L^p)$ we infer by density of $\mathscr{C}^\infty_0 $ in $L^p$ that $q\in C^{0,w}_{b}(\R_+;L^p)$. Then $u$ is also weakly continuous in time: $u\in C^{0,w}_{b}(\R_+;W^{3,p}).$

\medskip

To solve \eqref{eqqm} we will regularize it by introducing an artificial viscosity. More precisely, for $\ep>0$ let us consider the following PDE
\begin{equation}
  \label{eqqe1}
\dt q^\ep+u^\ep\cdot\nabla q^\ep-\ep\Delta q^\ep=0  \quad\text{with}\quad u^\ep=\T(q^\ep)+\sum_{i=1}^N\gamma_i\txi
\end{equation}
with Dirichlet boundary conditions
\begin{equation}
  \label{eqqe2}
q^\ep\bigl|_{\partial\Om}=0  
\end{equation}
and some smooth initial data
\begin{equation}
  \label{eqqe3}
q^\ep(0)\in\mathscr{C}^\infty_0   
\end{equation}
such that $q^\ep(0)\to q_0$ in $L^p$ as $\ep\to0$. The global existence of smooth solutions of \eqref{eqqe1}--\eqref{eqqe3} can be proved with classical methods, see for instance \cite[Chapter 15]{taylor_partial_1997}. Moreover, the $L^p$ norms of the solutions decrease in time:
\begin{equation*}
\nl p{q^\ep(t)}\leq \nl p{q^\ep(0)}  \quad\forall t\geq0.
\end{equation*}

Using also Proposition \ref{mainprop} we infer that $q^\ep$ is bounded in $L^\infty(\R_+;L^p)$ independently of $\ep$ and $u^\ep$ is bounded in $L^\infty(\R_+;W^{3,p})$ independently of $\ep$. Then we can extract a subsequence of $q^\ep$ that we denote again by $q^\ep$ and some $\qbb\in L^\infty(\R_+;L^p)$ and $\ubb\in L^\infty(\R_+;W^{3,p})$ such that
\begin{equation*}
q^\ep\rightharpoonup\qbb\quad\text{weakly in}\quad L^\infty(\R_+;L^p)  
\end{equation*}
and 
\begin{equation}\label{9}
u^\ep\rightharpoonup\ubb\quad\text{weakly in}\quad L^\infty(\R_+;W^{3,p}).  
\end{equation}

We now pass to the limit in \eqref{eqqe1} in the sense of distributions when $\ep\to0$. Obviously $\dt q^\ep\to\dt q$ and $\ep\Delta q^\ep\to0$ in the sense of distributions when $\ep\to0$. It remains to show that $u^\ep q^\ep\to \ubb\,\qbb$ in the sense of distributions. To do that, we observe first from \eqref{eqqe1} that $\dt q^\ep$ is bounded in $L^\infty(\R_+;W^{-2,p})$. Since the embedding $W^{-2,p}\hookrightarrow W^{-3,p}$ is compact we deduce from the Arzelà-Ascoli theorem that there exists a subsequence of $q^\ep$, again denoted by $q^\ep$, such that $q^\ep\to\qbb$ strongly in  $L^\infty_{loc}([0,\infty);W^{-3,p})$. This strong convergence combined with \eqref{9} implies that $u^\ep q^\ep\to\ubb\,\qbb$ in the sense of distributions. Indeed, the product $(u,q)\mapsto uq$ is continuous from $W^{3,p}\times W^{-3,p}$ into $\mathscr{D}'$ as can be seen from the following estimate:
\begin{equation*}
  \forall\varphi\in C^\infty_0\quad\Bigl|\int_\Om uq\varphi\Bigr|
\leq \|q\|_{W^{-3,p}}\|u\varphi\|_{W^{3,p}}
\leq C\|q\|_{W^{-3,p}}\|u\|_{W^{3,p}}\|\varphi\|_{W^{3,\infty}}.
\end{equation*}

We conclude that we can pass to the limit $\ep\to0$ in \eqref{eqqe1} to deduce that 
\begin{equation*}
  \dt\qbb+\ubb\cdot\nabla\qbb=0.
\end{equation*}
From the uniform in time convergence:  $q^\ep\to\qbb$ strongly in  $L^\infty_{loc}([0,\infty);W^{-3,p})$ we infer that
\begin{equation*}
\qbb(0)=\lim_{\ep\to0}q^\ep(0)=q_0.  
\end{equation*}

To complete the proof of Theorem \ref{existence} it remains to prove that $\ubb=\T(\qbb)+\sum_{i=1}^N\gamma_i\txi$. We know that $q^\ep=\curl(u^\ep-\al\Delta u^\ep)$ so, after passing to the limit $\ep\to0$ in the sense of distributions, we get that $\qbb=\curl(\ubb-\al\Delta \ubb)$. On the other hand, from \eqref{9} we have that $v^\ep\to\vbb=\ubb-\al\Delta\ubb$ weakly in $L^\infty(\R_+;W^{1,p})$. In particular we have convergence of the trace of $v^\ep$ on the boundary to the trace of $\vbb$ on the boundary. So the circulations of $v^\ep$ on each $\Gamma_i$ converge towards the circulations of $\vbb$ on each $\Gamma_i$. We infer that the circulation of $\vbb$ on $\Gamma_i$ is $\gamma_i$. This information combined with the relation $\qbb=\curl(\ubb-\al\Delta \ubb)$ implies that  $\ubb=\T(\qbb)+\sum_{i=1}^N\gamma_i\txi$. This completes the proof of Theorem \ref{existence}.

\section{Passing to the limit $\al\to0$}
\label{sect5}

In this section we show the convergence part of Theorem \ref{mainthm}. Let $\ual$ the solution constructed in Theorem \ref{existence} and let us also denote $v^\al=\ual-\al\Delta \ual$ and $q^\al=\curl v^\al$.

Multiplying \eqref{alphaeuler} by $\ual$ and integrating in space and time
implies that the $H^1_\al$ norm of the  velocity is conserved:
\begin{equation*}
\nhua{\ual(t)}=\nhua{\ualz} \quad\forall t\geq0. 
\end{equation*}
By hypothesis, we know that $\nhua{\ualz}$ is bounded uniformly in $\al$ hence
\begin{equation}\label{l1}
  \ual\text{ bounded in } L^\infty(\R_+;H^1_\al)
\end{equation}
uniformly in $\al$.

Moreover, from the transport equation verified by $q^\al$ we know that the $L^p$ norm of $q^\al$ is also conserved:
\begin{equation*}
\nl p{q^\al(t)}= \nl p{q^\al_0}  \quad\forall t\geq0
\end{equation*}
so
\begin{equation}\label{l2}
  q^\al\text{ bounded in } L^\infty(\R_+;L^p)
\end{equation}
uniformly in $\al$. 

Relation \eqref{l1} implies that $\ual$ is bounded in $L^\infty(\R_+;L^2)$. Using also relation \eqref{l2} we deduce that there exists a subsequence $\ualk$ of $\ual$, some vector field $\ub$ and  some scalar function $\omb$  such that
\begin{equation*}
\ualk\rha \ub\quad\text{weakly in }L^\infty(\R_+;L^2) 
\end{equation*}
and
\begin{equation}
  \label{eq:2}
q^{\al_k}\rha\omb\quad\text{weakly in }L^\infty(\R_+;L^p)  . 
\end{equation}

Clearly $\ub$ is divergence free and tangent to the boundary. Since $\al_k\curl\Delta \ualk\to0$ in the sense of distributions we have that $q^{\al_k}=\curl\ualk-\al_k\curl\Delta \ualk\to\curl\ub$ in the sense of distributions. By uniqueness of limits in the sense of distributions we infer that $\omb=\curl\ub$. 

We need to prove that $\ub$ verifies the Euler equation \eqref{euler}. In order to do that, we shall pass to the limit $\al\to0$ in \eqref{alphaeuler}. A simple calculation shows that the $\al$--Euler equations can be written under the following form
\begin{multline}
   \label{4}
\dt (\ual-\al\Delta \ual)+\dive(\ual\otimes \ual)
-\al\sum_{j,i}\partial_j\partial_i(\uall_j\partial_i \ual)
+\al\sum_{j,i}\partial_j(\partial_i \uall_j\partial_i \ual)\\
-\al\sum_{j,i}\partial_i(\partial_i \uall_j\nabla \uall_j)
=-\nabla \pi^\al
\end{multline}
for some $\pi^\al$ (see \cite{iftimie_remarques_2002-1}).

Because $\ualk\to\ub$ in the sense of
distributions we have that $\dt\ualk\to\dt\ub$ in the sense of
distributions and also $\dt\Delta\ualk\to\dt\Delta\ub$ in the sense of
distributions so $\al_k\dt\Delta\ualk\to0$ in the sense of
distributions. Recall also that the limit of a gradient is gradient. 

Let us now show that the last three terms on the left-hand side of \eqref{4} go to 0 in the
sense of distributions. Let us consider for example the term
${\al_k}\partial_j(\partial_i u^{\al_k}_j\partial_i u^{\al_k})$. Thanks to
Proposition \ref{propvelo}  we know that there exists some $\eta<\frac12$ such that $\nh1\ual\leq C\al^{-\eta}(\nhua\ual+\nl p{q^{\al}})$. We bound
\begin{align*}
\nl1{{\al_k}\partial_i u^{\al_k}_j\partial_i u^{\al_k}}
&\leq C{\al_k}\nh1\ualk^2  \\
&\leq C{\al_k}^{1-2\eta}(\nhua\ualk+\nl p{q^{\al_k}})^2\\
&\leq C{\al_k}^{1-2\eta}\\
&\longrightarrow 0
\end{align*}
when $\al_k\to0$. We infer that ${\al_k}\partial_i
u^{\al_k}_j\partial_i u^{\al_k}\to0$ in the sense of distributions so
${\al_k}\sum_{j,i}\partial_j(\partial_i u^{\al_k}_j\partial_i
u^{\al_k})\to0$ in the sense of the distributions. One can show in a
similar manner that the remaining two terms from \eqref{4} with coefficient $\al$ also go to 0 in
the sense of distributions.

It remains to pass to the limit in the term $\ualk\otimes\ualk$. To do that we require compactness of the sequence $\ualk$. This will be obtained via time-derivative estimates. To get these time-derivative estimates it is more practical to work in $L^2$ based function spaces. 

Let $\A_2$ be the Stokes operator seen as an unbounded operator in $L^2$. For $s\geq0$ we define $X^s$ to be domain of $\A_2^{\frac s2}$ with norm $\|f\|_{X^s}=\nl2{\A_2^{\frac s2}f}$. We also define $X^{-s}$ to be the dual space of $X^s$.

Estimates on the time derivative of $\ual-\al\Delta \ual$ are easy to obtain directly from the PDE \eqref{alphaeuler} but we need estimates on $\partial_t\ual$ and we  must be careful about the dependence on $\al$. 

Let us consider a test vector field $\varphi\in X^4$ and let us define $\varphi^\al=(1+\al\AA)^{-1}\varphi$. One can use the classical results about the domain of $\A_2^s$ (see for example \cite[Chapter 4]{constantin_navier-stokes_1988}) to observe that $\varphi^\al\in D(\A_2^3)$. Expressing both $\varphi$ and $\varphi^\al$ in terms of an orthonormal base of eigenfunctions of $\A_2$ as in  \cite[Chapter 4]{constantin_navier-stokes_1988} and using the regularity results in that reference, one can easily see that  we have
\begin{equation}\label{2}
\nh 4{\varphi^\al}\leq C\nl2{ \AA^2  \varphi^\al}\leq C\nl2 {\AA^2\varphi}=C\|\varphi\|_{X^4}.
\end{equation}

Recall that since $\varphi^\al$ is divergence free and tangent to the boundary (even vanishing on the boundary) we have that $\PP\varphi^\al=\varphi^\al$. We multiply \eqref{4} by $\varphi^\al=\PP\varphi^\al$ to obtain
\begin{multline*}
\langle \partial_t(\ual-\al\Delta \ual),\PP\varphi^\al\rangle
=\int_\Om \ual\cdot\nabla\varphi^\al\cdot \ual
+\al \sum_{j,i}\int_\Om \uall_j\partial_i \ual\cdot \partial_j\partial_i\varphi^\al\\
+\al\sum_{j,i}\int_\Om \partial_i \uall_j\partial_i \ual\cdot\partial_j\varphi^\al
-\al\sum_{j,i}\int_\Om \partial_i \uall_j\nabla \uall_j\cdot \partial_i\varphi^\al.
\end{multline*}

We now bound each of these terms. First, by the Hölder inequality and  by
Sobolev embeddings we have that
\begin{equation*}
\bigl|\int_\Om \ual\cdot\nabla\varphi^\al\cdot \ual \bigr|
\leq C\nl{2}{\ual}^2\nl{\infty}{\nabla\varphi^\al}
\leq C\nl{2}{\ual}^2\nh3{\varphi^\al}
\leq C\|\varphi\|_{X^4}
\end{equation*}
where we also used \eqref{l1} and  \eqref{2}.

Similarly,
\begin{align*}
\bigl|\al \sum_{j,i}\int_\Om \uall_j\partial_i \ual\cdot \partial_j\partial_i\varphi^\al\bigr|
&\leq C\al\nl2{\ual}\nh1{\ual}\|\varphi^\al\|_{W^{2,\infty}}\\
&\leq C\al^{\frac12}\nhua{\ual}^2\nh4{\varphi^\al}\\
&\leq C\|\varphi\|_{X^4}
\end{align*}
and
\begin{align*}
  \bigl|\al\sum_{j,i}\int_\Om \partial_i \uall_j\partial_i \ual\cdot\partial_j\varphi^\al
-\al\sum_{j,i}\int_\Om \partial_i \uall_j\nabla \uall_j\cdot \partial_i\varphi^\al\bigr|
&\leq C\al\nh1{\ual}^2\nl\infty{\nabla\varphi^\al}\\
&\leq C\nhua{\ual}^2\nh3{\varphi^\al}\\
&\leq C\|\varphi\|_{X^4}.
\end{align*}

On the other hand, we have that
\begin{align*}
\langle \partial_t(\ual-\al\Delta \ual),\PP\varphi^\al\rangle  
&=\langle \PP\partial_t(\ual-\al\Delta \ual),\varphi^\al\rangle\\  
&=\langle \partial_t(\ual+\al\A_2 \ual),\varphi^\al\rangle  \\
&=\langle \partial_t\ual,(1+\al\A_2)\varphi^\al\rangle  \\
&=\langle \partial_t\ual,\varphi\rangle.  
\end{align*}

We deduce from the above estimates the following bound:
\begin{equation*}
|\langle \partial_t\ual,\varphi\rangle|\leq  C\|\varphi\|_{X^4}.
\end{equation*}

This implies that $\partial_t\ual$ is bounded in $L^\infty(\R_+;X^{-4})$. In particular, the $\ual$ are equicontinuous in time with values in $X^{-4}$. The $\ual$ are also bounded in $X^{-4}$ because by \eqref{l1} they are bounded in $L^2$ and $L^2_\sigma=X^0\hookrightarrow X^{-4}$. Moreover, by compact Sobolev embeddings we know that the embedding $X^{-4}\hookrightarrow X^{-5}$ is compact. Finally, the Arzelà-Ascoli theorem implies that there exists a subsequence of $\ualk$, again denoted by $\ualk$, such that $\ualk(t)\to\ub(t)$ in $X^{-5}$ uniformly in time:
\begin{equation}\label{f11}
\ualk\to\ub\quad\text{strongly in }L^\infty_{loc}([0,\infty);X^{-5}).  
\end{equation}

Thanks to Proposition \ref{propvelo} we know that there exists some $s_0\in(0,\frac12)$ such
that $\ualk$ is bounded in
$L^\infty(\R_+;H^{s_0})$. Therefore in $L^\infty(\R_+;X^{s_0})$
too. Consequently $\ub\in L^\infty(\R_+;X^{s_0})$. By interpolation and using \eqref{f11} we deduce that 
\begin{equation}\label{l2conv}
\ualk\to\ub\quad\text{strongly in }L^\infty_{loc}([0,\infty);L^2).  
\end{equation}

We infer that $\ualk\otimes\ualk\to\ub\otimes\ub$ in the sense of
distributions and therefore
$\dive(\ualk\otimes\ualk)\to\dive(\ub\otimes\ub)$ in the sense of
distributions too.   

We proved that $\ub$ verifies the incompressible Euler equation. Moreover, we recall \eqref{eq:2} which says in particular that $\omb=\curl\ub\in L^\infty(\R_+;L^p)$. To complete the proof of Theorem \ref{mainthm} it suffices to show that $\ualk\to\ub$ in  $L^\infty_{loc}([0,\infty);W^{s,p})$ for all $s<\frac1p$. We consider two cases, depending on $p$ being larger or smaller than 2. 

If $p\leq 2$ we have that $L^2\subset L^p$ so from \eqref{l2conv} we deduce that $\ualk\to\ub$ in  $L^\infty_{loc}([0,\infty);L^p)$. But we know from Proposition \ref{propvelo} and from the boundedness of $\nhua {u^\al}$ and of $\nl p{q^\al}$ that $\ualk$ is bounded in $L^\infty_{loc}([0,\infty);W^{s,p})$ for all $s<\frac1p$. By interpolation we conclude that $\ualk\to\ub$ in  $L^\infty_{loc}([0,\infty);W^{s,p})$ for all $s<\frac1p$.

If $p\geq2$ then $p'\equiv\frac{p}{p-1}\leq2$. So we have the Sobolev embedding $W^{1,p'}_0\hra L^2$. Passing to the dual we obtain that $L^2\hra W^{-1,p}$. Then we deduce from \eqref{l2conv} that $\ualk\to\ub$ in  $L^\infty_{loc}([0,\infty);W^{-1,p})$. We conclude as above by interpolation that $\ualk\to\ub$ in  $L^\infty_{loc}([0,\infty);W^{s,p})$ for all $s<\frac1p$. This completes the proof of Theorem \ref{mainthm}.

\begin{remark}\label{yudovich}
If $p=\infty$ we have that  $L^\infty\subset L^{r}$ for any $r$ so if $q^\al_0$ is bounded in $L^\infty$ it is also bounded in any $L^{r}$ with $r$ finite. Therefore one can still pass to the limit $\al\to0$ using the case $p<\infty$. The limit solution $\ub$ is a Yudovich solution. Indeed, on one hand we know from \eqref{eq:2} that $q^\al$ converges to $\omb$ and on the other hand $q^\al$ is bounded in $L^\infty(\R_+\times\Om)$. So necessarily $\omb\in L^\infty(\R_+\times\Om)$ which implies that $\ub$ is a Yudovich solution. We conclude that Theorem \ref{mainthm} remains true in the case $p=\infty$ with the following modifications in the conclusion:
\begin{itemize}
\item  the solution $u^\al$ belongs to the space $L^\infty(\R_+;W^{3,r} )$ for all $r<\infty$ instead of $L^\infty(\R_+;W^{3,\infty} )$;
\item the convergence holds true in $L^\infty_{loc}([0,\infty);W^{s,r} )$ for all $s<\frac1r$ and $r<\infty$. 
\end{itemize}
\end{remark}

\section{The case of second grade fluids}
\label{sect6}

The equation of motion of second grade fluids read as follows:
\begin{equation}\label{secondgrade}
  \partial_t (u-\al\Delta u)-\nu\Delta u+u\cdot\nabla(u-\al\Delta u)+\sum_j(u-\al\Delta u)_j\nabla u_j=-\nabla \pi, \qquad \dive u=0.
\end{equation}
We endow this equation with the Dirichlet boundary conditions too. We observe that the $\al$--Euler equations are a particular case of second grade fluids, more precisely they are the vanishing viscosity case $\nu=0$. We refer to the recent book \cite{cioranescu_mechanics_2016} for an extensive discussion of various aspects of the second grade fluids.

As for the $\al$--Euler equations, we use the notation $v=u-\al\Delta u$ and $q=\curl v$.

Let us mention at this point that convergence towards a solution of the Euler equation when $\al,\nu\to0$ was proved in the case of the Navier boundary conditions without any condition on the relative sizes of $\nu$ and $\al$ in dimension two, see \cite{busuioc_incompressible_2012}, and with the condition $\frac\nu\al$ bounded in dimension three, see \cite{busuioc_uniform_2016}.

In the case of the Dirichlet boundary conditions, convergence towards a solution of the Navier-Stokes equations when $\al\to0$ and $\nu>0$ is fixed was proved in \cite{busuioc_second_2016}, see also \cite{iftimie_remarques_2002-1}.

We would now like to know if the solutions of \eqref{secondgrade}--\eqref{Dir} converge to a solution of the incompressible Euler equation \eqref{euler}--\eqref{tan} when both $\al$ and $\nu$ converge to 0. The only result in that direction is given in \cite{lopes_filho_approximation_2015} where convergence towards a $H^3$ solution of the Euler equation is proved under the assumption that $\frac\nu\al$ is bounded.
\begin{theorem}[\cite{lopes_filho_approximation_2015}]\label{sgo}
Let $\ub_0\in H^3 $ be divergence free and tangent to the boundary. Assume that $u_0^{\al,\nu}$ verifies
\begin{itemize}
\item $u^{\al,\nu}_0\in H^3 $, $\dive u_0^{\al,\nu}=0$ and $u_0^{\al,\nu}\bigl|_{\partial\Om}=0$;
\item $\al^{\frac12}\nl2{\nabla u^{\al,\nu}_0}\to0$ and $\al^{\frac32}\|u^{\al,\nu}_0\|_{H^3}$ is bounded as $\al,\nu\to0$;
\item $u^{\al,\nu}_0\to \ub_0$ in $L^2$ as $\al,\nu\to0$. 
\end{itemize}
Assume moreover that  $\frac\nu\al$ is bounded.
Then the unique global $H^3$ solution of \eqref{secondgrade},\eqref{Dir} with initial data $u^{\al,\nu}_0$ converges in $L^\infty_{loc}([0,\infty);L^2 )$ when $\al\to0$ towards the unique global $H^3$ solution $\ub$ of \eqref{euler}--\eqref{tan} with initial data $\ub_0$.
\end{theorem}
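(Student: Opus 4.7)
The plan is to adapt Kato's vanishing viscosity criterion, as implemented for $\al$--Euler in \cite{lopes_filho_convergence_2015}, to the second grade setting. The hypothesis $\nu/\al$ bounded will be used to tame the extra dissipation $-\nu\Delta u$. Multiplying \eqref{secondgrade} by $u^{\al,\nu}$ and integrating by parts, using $u^{\al,\nu}|_{\partial\Om}=0$, gives
\begin{equation*}
\tfrac12\dt\bigl(\nl2{u^{\al,\nu}}^2+\al\nl2{\nabla u^{\al,\nu}}^2\bigr)+\nu\nl2{\nabla u^{\al,\nu}}^2=0,
\end{equation*}
so $\nhua{u^{\al,\nu}(t)}=\nhua{u^{\al,\nu}_0}$, and in particular the Kato-type smallness $\al^{1/2}\nl2{\nabla u^{\al,\nu}(t)}\to0$ propagates from $t=0$ to all times. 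The higher-order bound $\al^{3/2}\|u^{\al,\nu}(t)\|_{H^3}\leq C$ is propagated from the initial data by classical $H^3$ estimates for \eqref{secondgrade}, where $\nu/\al$ bounded is essential to absorb the Laplacian contribution.

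Next I compare $u^{\al,\nu}$ to $\ub$ by a boundary corrector. Since $u^{\al,\nu}$ vanishes on $\partial\Om$ while $\ub$ does not, I introduce a smooth divergence-free cutoff $\psi^{\al,\nu}$ supported in a strip of width $\delta=\delta(\al,\nu)$ around $\partial\Om$, tangentially equal to $\ub$ on $\partial\Om$, with the standard estimates $\nl\infty{\psi^{\al,\nu}}\leq C$, $\nl2{\psi^{\al,\nu}}\leq C\delta^{1/2}$ and $\nl2{\nabla\psi^{\al,\nu}}\leq C\delta^{-1/2}$. Then $z=u^{\al,\nu}-\ub+\psi^{\al,\nu}$ vanishes on $\partial\Om$, and subtracting \eqref{euler} from \eqref{secondgrade} and testing against $z$ yields an identity of the form
\begin{equation*}
\tfrac12\dt\bigl(\nl2z^2+\al\nl2{\nabla z}^2\bigr)+\nu\nl2{\nabla z}^2=\mathcal R_{\ub}+\mathcal R_\al+\mathcal R_\psi,
\end{equation*}
where $\mathcal R_{\ub}$ collects quadratic interactions with $\ub$ (giving standard $C\nl2z^2$ terms via $\ub\in W^{1,\infty}$), $\mathcal R_\al$ collects the $\al$-dispersive contributions from the second grade nonlinearity (handled as in \cite{lopes_filho_convergence_2015} using smallness of $\al^{1/2}\nl2{\nabla u^{\al,\nu}}$), and $\mathcal R_\psi$ collects every contribution involving the corrector.

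The heart of the matter is $\mathcal R_\psi$. Representative bad terms are the Kato viscous boundary piece, bounded by $\tfrac\nu2\nl2{\nabla u^{\al,\nu}}^2+C\nu\delta^{-1}$, the transport term $\int_\Om u^{\al,\nu}\cdot\nabla\psi^{\al,\nu}\cdot z$, and analogous $\al$-dispersive strip terms; all of these can be absorbed into the energy provided $\nu\delta^{-1}\to0$ and $\al\delta^{-2}\to0$. Choosing $\delta\sim\al^{1/2+\eta}$ for small $\eta>0$ and using $\nu\leq C\al$ makes every bad term $o(1)$. Gronwall then yields $\nl2{z(t)}\to0$ uniformly on $[0,T]$; since $\nl2{\psi^{\al,\nu}}\leq C\delta^{1/2}\to0$, this gives $u^{\al,\nu}\to\ub$ in $L^\infty_{loc}([0,\infty);L^2)$.

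The main obstacle is the simultaneous control of three competing requirements on $\delta$: $\nu/\delta\to0$ to kill the viscous Kato boundary term, $\al/\delta^2\to0$ to kill the $\al$-dispersive strip contributions, and $\delta\to0$ to make the corrector small in $L^2$. Satisfying all three forces $\nu\lesssim\al$, which is precisely the hypothesis of the theorem; relaxing it appears to require an entirely different argument.
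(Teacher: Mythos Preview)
This theorem is not proved in the present paper: it is quoted from \cite{lopes_filho_approximation_2015} as background, and the paper offers no argument for it beyond the citation. So there is no ``paper's own proof'' to compare against; your sketch is an attempted reconstruction of the Kato-corrector argument from that reference.

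That said, two concrete errors in your sketch are worth flagging. First, the energy identity you wrote gives $\nhua{u^{\al,\nu}(t)}\leq\nhua{u^{\al,\nu}_0}$, not equality, and more importantly it only yields $\al\nl2{\nabla u^{\al,\nu}(t)}^2\leq\nl2{u^{\al,\nu}_0}^2+\al\nl2{\nabla u^{\al,\nu}_0}^2$, which is bounded but not $o(1)$ (the first term on the right does not vanish). So the claim that the smallness $\al^{1/2}\nl2{\nabla u^{\al,\nu}(t)}\to0$ ``propagates from $t=0$ to all times'' is not justified by the energy identity alone; in the actual Kato-type argument the relevant smallness enters differently, through the structure of the error terms and the time-integrated dissipation. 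Second, your choice $\delta\sim\al^{1/2+\eta}$ gives $\al/\delta^2\sim\al^{-2\eta}\to\infty$, directly violating the constraint $\al/\delta^2\to0$ that you yourself list. The exponent should go the other way: $\delta\sim\al^{1/2-\eta}$ with $0<\eta<\tfrac12$ satisfies all three of $\nu/\delta\to0$, $\al/\delta^2\to0$, $\delta\to0$ once $\nu\leq C\al$.
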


We would now like to extend our result for the $\al$--Euler equations to the second grade fluids, proving convergence of \eqref{secondgrade}--\eqref{Dir} towards solutions of \eqref{euler}--\eqref{tan} with $L^p$ vorticity on multiply-connected domains. Our convergence result is based on $L^p$ estimates uniform in $\al$ for the vorticity $q$. Let us remark right away that such estimates can't hold true when $\al$ and $\nu$ are of the same size. More precisely, we have the following observation.
\begin{proposition} \label{unbound}
Under the hypotheses of Theorem \ref{sgo} assume in addition that both $\frac\nu\al$ and $\int_\Om q^{\al,\nu}_0$ have non-zero limits when $\al\to0$. Then for any $r>1$ the vorticity $q^{\al,\nu}$ is unbounded in $L^r_{loc}((0,\infty)\times\overline\Om)$. 
\end{proposition}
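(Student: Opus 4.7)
My plan is to derive an explicit ODE for the spatial mean $A^{\al,\nu}(t):=\int_\Om q^{\al,\nu}(t,\cdot)$ and to extract a contradiction via weak $L^r$ compactness. Taking the curl of \eqref{secondgrade} yields the standard two-dimensional vorticity equation for second grade fluids
\begin{equation*}
\dt q+u\cdot\nabla q=\nu\Delta\om,\qquad \om:=\curl u.
\end{equation*}
Integrating over $\Om$, the transport term vanishes thanks to $u|_{\partial\Om}=0$, so only $\int_\Om\Delta\om$ remains to be computed. Using $q=\om-\al\Delta\om$ together with $\int_\Om\om=\int_{\partial\Om}u\cdot n^\perp=0$ (again from $u|_{\partial\Om}=0$), this integral equals $-A^{\al,\nu}/\al$, so that
\begin{equation*}
(A^{\al,\nu})'=-\frac{\nu}{\al}\,A^{\al,\nu},\qquad A^{\al,\nu}(t)=A^{\al,\nu}(0)\,e^{-\nu t/\al}.
\end{equation*}
Writing $\mu=\lim(\nu/\al)\neq0$ and $Q_0=\lim A^{\al,\nu}(0)\neq0$, this yields a limit that is uniform on bounded $t$-intervals: $A^{\al,\nu}(t)\to Q_0 e^{-\mu t}$.

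Next I would argue by contradiction. Fix $0<T_1<T_2<\infty$ and assume $q^{\al,\nu}$ stays bounded in $L^r((T_1,T_2)\times\Om)$ as $\al\to0$. Since $r>1$, reflexivity of $L^r$ produces a subsequence with $q^{\al_k,\nu_k}\rha\widetilde q$ weakly in $L^r((T_1,T_2)\times\Om)$. I would identify $\widetilde q$ using Theorem~\ref{sgo}: from $u^{\al,\nu}\to\ub$ in $L^\infty_{loc}([0,\infty);L^2)$ one gets $\om^{\al,\nu}=\curl u^{\al,\nu}\to\curl\ub$ in $\mathscr{D}'$, while for any $\varphi\in\mathscr{C}^\infty_0$,
\begin{equation*}
|\langle\al\Delta\om^{\al,\nu},\varphi\rangle|=\al\,|\langle\om^{\al,\nu},\Delta\varphi\rangle|\longrightarrow 0.
\end{equation*}
Hence $q^{\al,\nu}=\om^{\al,\nu}-\al\Delta\om^{\al,\nu}\to\curl\ub$ in $\mathscr{D}'$, and by uniqueness of distributional limits $\widetilde q=\curl\ub$ a.e.\ on $(T_1,T_2)\times\Om$.

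Finally I would test the weak $L^r$ convergence against an arbitrary $\chi\in L^{r'}(T_1,T_2)$, regarded as an element of $L^{r'}((T_1,T_2)\times\Om)$ that depends only on $t$:
\begin{equation*}
\int_{T_1}^{T_2}\chi(t)\,A^{\al_k,\nu_k}(t)\,dt\longrightarrow \int_{T_1}^{T_2}\chi(t)\int_\Om\curl\ub(t,x)\,dx\,dt.
\end{equation*}
Combined with $A^{\al,\nu}(t)\to Q_0 e^{-\mu t}$ this forces $Q_0 e^{-\mu t}=\int_\Om\curl\ub(t,\cdot)$ for a.e.\ $t\in(T_1,T_2)$. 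But the right-hand side equals the total circulation $\int_{\partial\Om}\ub\cdot n^\perp$, which is conserved by the Euler flow and hence independent of $t$, whereas the left-hand side is non-constant since $Q_0\mu\neq 0$. The only step demanding genuine care is the identification of the weak limit: the key observation is that $\al\Delta\om^{\al,\nu}$ is distributionally negligible, even though, precisely as the proposition asserts, it cannot be small in $L^r$.
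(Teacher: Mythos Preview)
Your proof is correct and follows essentially the same route as the paper's: derive the ODE $A'=-\tfrac{\nu}{\al}A$ for the total potential vorticity, solve it explicitly, then argue by contradiction using weak $L^r$ compactness and the distributional identification $q^{\al,\nu}\to\curl\ub$ to conclude that the conserved Euler circulation would have to equal the strictly decaying exponential $Q_0e^{-\mu t}$. The only cosmetic differences are that you write the viscous term as $\nu\Delta\om$ rather than $\nu\curl\Delta u$ and spell out the distributional negligibility of $\al\Delta\om^{\al,\nu}$ a bit more explicitly.
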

\begin{proof}
Let us apply the curl operator to \eqref{secondgrade}. We find that $q^{\al,\nu}=\curl(u^{\al,\nu}-\al\Delta u^{\al,\nu})$ verifies the following PDE:
\begin{equation}\label{equq}
\dt q^{\al,\nu}-\nu\curl\Delta u^{\al,\nu}+u^{\al,\nu}\cdot\nabla q^{\al,\nu}=0.  
\end{equation}
Integrating in space yields
\begin{equation*}
\dt\int_\Om q^{\al,\nu}-\nu \int_\Om\curl\Delta u^{\al,\nu}+  \int_\Om \dive(u^{\al,\nu}q^{\al,\nu})=0. 
\end{equation*}
Because $u^{\al,\nu}$ vanishes on the boundary, the Stokes formula implies that the last term on the left-hand side vanishes. For the same reason we have that $\int_\Om\curl u^{\al,\nu}=0$. We infer that
\begin{equation*}
 \int_\Om\curl\Delta u^{\al,\nu}=-\frac1\al\int_\Om q^{\al,\nu}. 
\end{equation*}
We deduce that
\begin{equation*}
\dt\int_\Om q^{\al,\nu}+\frac\nu\al  \int_\Om q^{\al,\nu}=0 
\end{equation*}
so
\begin{equation}\label{noncons}
\int_\Om q^{\al,\nu}(t)=e^{-\frac\nu\al t}\int_\Om q^{\al,\nu}_0.  
\end{equation}

By hypothesis, there exist $\ell_1,\ell_2\not =0$ such that
\begin{equation*}
\frac\nu\al\to\ell_1\quad\text{and}\quad  \int_\Om q^{\al,\nu}_0\to\ell_2.
\end{equation*}
Relation \eqref{noncons} implies that
\begin{equation}\label{limq}
\int_\Om q^{\al,\nu}(t)\to e^{-t\ell_1}\ell_2.  
\end{equation}

Now let us assume by absurd that $q^{\al,\nu}$ is bounded in $L^r_{loc}((0,\infty)\times\overline\Om)$ for some $r>1$. Then there is a subsequence of   $q^{\al,\nu}$, also denoted by $q^{\al,\nu}$, which converges  to some $\qb$ weakly in $L^r_{loc}((0,\infty)\times\overline\Om)$. Then $\int_\Om q^{\al,\nu}\to\int_\Om\qb$ weakly in $L^r_{loc}((0,\infty))$. In view of \eqref{limq} we infer that
\begin{equation}\label{intq}
  \int_\Om\qb(t,x)\,dx=e^{-t\ell_1}\ell_2
\end{equation}
almost everywhere in time.

But $u^{\al,\nu}\to\ub$ so $\al\curl\Delta u^{\al,\nu}\to0$ in the sense of distributions. Consequently $q^{\al,\nu}=\curl u^{\al,\nu}-\al\curl\Delta u^{\al,\nu}\to \curl\ub$ in the sense of distributions. By uniqueness of limits in the sense of distributions, we infer that $\qb=\curl\ub$. This is a contradiction because for a solution of the Euler equation the integral of vorticity is conserved in time while \eqref{intq} implies that the integral of $\qb$ is not constant in time. This completes the proof.
\end{proof}

Proposition \ref{unbound} shows that we can't hope to adapt our approach to second grade fluids if $\nu$ and $\al$ are of the same size. But if $\nu$ is slightly smaller in size than $\al$ then we can prove convergence to the Euler equations.

\begin{theorem}\label{secondthm}
Let $\Om$ be a smooth bounded domain of $\R^2$ and $1<p<\infty$. Assume that $\nu\leq \al^{1+\ep}$ for some $\ep>0$ independent of $\al$.
Let $\ub_0\in W^{1,p} $ be divergence free and tangent to the boundary. Let $u_0^{\al,\nu}$ be such that
\begin{itemize}
\item $u_0^{\al,\nu}\in W^{3,p} $, $\dive u_0^{\al,\nu}=0$ and $u_0^{\al,\nu}\bigl|_{\partial\Om}=0$;
\item $\nl2{u_0^{\al,\nu}}$, $\al^{\frac12}\nl2{\nabla u_0^{\al,\nu}}$ and $\nl p{q_0^{\al,\nu}}$ are bounded independently of $\al$;
\item $u_0^{\al,\nu}\to \ub_0$ in $L^2$ as $\al\to0$. 
\end{itemize}
Then there exists a global solution $\ual\in L^\infty(\R_+;W^{3,p} )$ of \eqref{secondgrade}--\eqref{Dir}. Moreover, there exists a subsequence of solutions $u^{\al_k,\nu_k}$ and a global solution $\ub$ of the Euler equations \eqref{euler}--\eqref{tan} with initial data $\ub_0$ such that $u^{\al_k,\nu_k}\to \ub$ in $L^\infty_{loc}([0,\infty);L^2 )$. In addition, if $\ep\geq\frac12$ then the limit solution has $L^p$ vorticity: $\curl\ub\in L^\infty_{loc}([0,\infty);L^p )$. If $\ep<\frac12$ then the limit solution has $L^r$ vorticity, $\curl\ub\in L^\infty_{loc}([0,\infty);L^r)$, for any $r$ verifying $1< r\leq p$ and $r<\frac1{1-2\ep}$.
\end{theorem}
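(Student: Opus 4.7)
The plan is to mimic the proofs of Theorems \ref{existence} and \ref{mainthm}, carefully tracking how the additional viscous term $-\nu\Delta u$ interacts with the previous analysis; the size constraint $\nu\leq\al^{1+\ep}$ will be essential both for the vorticity estimate and for the vanishing of the viscous contribution in the limit. For existence at fixed $\al,\nu>0$, taking the curl of \eqref{secondgrade} and using $\curl\Delta u=\tfrac{1}{\al}(\curl u-q)$ gives
\begin{equation*}
\dt q+u\cdot\nabla q+\tfrac{\nu}{\al}q=\tfrac{\nu}{\al}\curl u.
\end{equation*}
The circulations $\gamma_i(t)=\int_{\Gamma_i}v\cdot n^\perp$ are no longer conserved; they evolve by $\dot\gamma_i=\nu\int_{\Gamma_i}\Delta u\cdot n^\perp$. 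I would close the system in the unknowns $(q,\gamma_1,\dots,\gamma_N)$ via $u=\T(q)+\sum_i\gamma_i\txi$ and adapt the artificial-viscosity scheme of Section \ref{sect4}, the extra $\nu$-term being harmless at fixed $\al$.

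Next I would establish two uniform-in-$(\al,\nu)$ bounds. Multiplying \eqref{secondgrade} by $u^{\al,\nu}$ yields the energy identity
\begin{equation*}
\tfrac{1}{2}\tfrac{d}{dt}\nhua{u^{\al,\nu}}^2+\nu\nl2{\nabla u^{\al,\nu}}^2=0,
\end{equation*}
so $\nhua{u^{\al,\nu}}$ stays bounded. For any $r\in(1,p]$ with $r<1/(1-2\ep)$ (or any $r\in(1,p]$ if $\ep\geq\tfrac12$), multiplying the vorticity equation by $|q^{\al,\nu}|^{r-2}q^{\al,\nu}$ and applying the $L^r$-version of Proposition \ref{propvelo} produces
\begin{equation*}
\tfrac{d}{dt}\nl r{q^{\al,\nu}}+\tfrac{\nu}{\al}\nl r{q^{\al,\nu}}\leq\tfrac{\nu}{\al}\nl r{\curl u^{\al,\nu}}\leq C\tfrac{\nu}{\al}\al^{-\frac12+\frac1{2r}-\ep'}\bigl(\nl r{q^{\al,\nu}}+\nhua{u^{\al,\nu}}\bigr).
\end{equation*}
Since $\tfrac{\nu}{\al}\leq\al^\ep$, the right-hand coefficient is controlled by $\al^{\ep-1/2+1/(2r)-\ep'}$, which is uniformly bounded as $\al\to 0$ precisely when $r<1/(1-2\ep)$ (with $\ep'$ small enough); Gronwall then delivers a uniform bound on $\nl r{q^{\al,\nu}}$ on every compact time interval.

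The compactness argument of Section \ref{sect5} adapts essentially unchanged. The new viscous contribution to $\langle\dt u^{\al,\nu},\varphi\rangle$ is bounded by $\nu\nl2{\nabla u^{\al,\nu}}\nh1{\varphi^\al}\leq C\al^{1/2+\ep}\|\varphi\|_{X^4}$, so $\dt u^{\al,\nu}$ remains bounded in $L^\infty(\R_+;X^{-4})$ and Arzelà--Ascoli yields strong convergence in $L^\infty_{loc}([0,\infty);L^2)$ along a subsequence. Passing to the limit in the distributional form of \eqref{secondgrade}, the $\al$-nonlinear terms vanish as in Section \ref{sect5}, and $\nu\Delta u^{\al,\nu}\to 0$ in distributions because $\nu\nl2{\nabla u^{\al,\nu}}\leq C\al^{1/2+\ep}\to 0$. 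The weak-$*$ limit of $q^{\al_k,\nu_k}$ in $L^\infty(\R_+;L^r)$ is then identified with $\curl\ub$ via $\al_k\curl\Delta u^{\al_k,\nu_k}\to 0$ in distributions, giving the stated $L^r$-regularity of the limit vorticity.

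The hard part is the vorticity estimate: the forcing $\tfrac{\nu}{\al}\curl u$ is potentially singular as $\al\to 0$, and only the interplay between the damping $\tfrac{\nu}{\al}q$ on the left-hand side, the size constraint $\nu\leq\al^{1+\ep}$, and the analyticity-based gain of $\al^{-1/2+1/(2r)}$ in Proposition \ref{propvelo} allows one to close the estimate. The critical threshold $r=1/(1-2\ep)$ arises precisely from this three-way balance.
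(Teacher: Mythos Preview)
Your proposal is correct and follows essentially the same route as the paper: decreasing $H^1_\al$ energy, an $L^r$ vorticity estimate obtained by multiplying the $q$-equation by $q|q|^{r-2}$ and bounding $\|\curl u\|_{L^r}$ via \eqref{eq:8u}, followed by the compactness argument of Section~\ref{sect5} with the linear viscous term vanishing in the limit. The only detail the paper adds beyond your sketch is the explicit computation $\gamma_i(t)=\gamma_i(0)e^{-\nu t/\al}$ for the circulations (using $\Delta u=-v/\al$ on $\partial\Om$), which shows directly that they remain bounded.
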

\begin{remark}
The conclusion of Theorem \ref{secondthm} is slightly better than stated in the sense that we actually obtain convergence in $L^\infty_{loc}([0,\infty);W^{s,r} )$ for all $s<\frac1{r}$ and, by Sobolev embeddings, in $L^\infty_{loc}([0,\infty);H^{s'} )$ for all $s'<\min(1-\frac1{r},\frac12)$.  Here $r$ is either $p$ if $\ep\geq\frac12$, or any real number verifying $1< r\leq p$ and $r<\frac1{1-2\ep}$ if $\ep<\frac12$.
A second remark is that Theorem \ref{secondthm} is somewhat weaker than Theorem \ref{mainthm} in the sense that the limit solution does not always have vorticity in $L^p$ as we would expect from the initial vorticity belonging to $L^p$.
\end{remark}

The remainder of this section is devoted to the proof of Theorem \ref{secondthm}. We will not give all the details as the proof is very similar to the proof of Theorem \ref{mainthm}. We will only underline the differences. For clarity reasons we drop the superscript $^{\al,\nu}$ on the various quantities.

If we analyze the proof we gave for the $\al$--Euler equations, we realize that there are three main ingredients that need to be checked in the case of second grade fluids:
\begin{itemize}
\item $H^1_\al$ estimates for the velocity $u$;
\item Estimates for the circulations of $v$ on each $\Gamma_i$;
\item $L^p$ estimates for $q$.
\end{itemize}

The $H^1_\al$ estimates for $u$ go through easily. Indeed, if we multiply \eqref{secondgrade} by $u$, integrate in the $x$ variable and do some integrations by parts using that $u$ vanishes on the boundary we obtain that
\begin{equation*}
\dt\nhua u^2+2\nu\nl2{\nabla u}^2=0.  
\end{equation*}
So the $H^1_\al$ norm of $u$ decreases.

The circulations of $v$ on each $\Gamma_i$ are not conserved anymore but can nevertheless be computed and shown to be decreasing. More precisely, let
\begin{equation*}
\gamma_i(t)=  \int_{\Gamma_i}v(t)\cdot n^\perp.
\end{equation*}
We have the following result.
\begin{lemma}
Let $u$ be a sufficiently smooth solution of \eqref{secondgrade}. Then for every $i\in\{1,\dots,n\}$ the circulation of $v$ on $\Gamma_i$ is given by
\begin{equation*}
\gamma_i(t)=\gamma_i(0)e^{-\frac\nu\al t}.  
\end{equation*}

\end{lemma}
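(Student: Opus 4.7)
The plan is to mimic the proof of Lemma \ref{circulation} and track the single new contribution coming from the viscous term $-\nu\Delta u$ in \eqref{secondgrade}. Starting from the PDE satisfied by $v=u-\alpha\Delta u$, which is now
\begin{equation*}
\partial_t v + u\cdot\nabla v + \sum_j v_j\nabla u_j = \nu\Delta u - \nabla\pi,
\end{equation*}
I would multiply by the unit tangent vector $n^\perp$ on $\Gamma_i$ and integrate. Since $u$ vanishes on $\partial\Omega$, the convective term $\int_{\Gamma_i}(u\cdot\nabla v)\cdot n^\perp$ vanishes, and the term $\int_{\Gamma_i}\sum_j v_j(n^\perp\cdot\nabla u_j)$ vanishes because $n^\perp\cdot\nabla$ is a tangential derivative annihilating $u|_{\partial\Omega}=0$; the pressure contribution $\int_{\Gamma_i}n^\perp\cdot\nabla\pi$ vanishes because $\Gamma_i$ is a closed curve. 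All of these arguments are identical to those in Lemma \ref{circulation}.

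The only new ingredient is the term $\nu\int_{\Gamma_i}\Delta u\cdot n^\perp$. The key observation is that from the very definition $v=u-\alpha\Delta u$ we have $\Delta u=\frac{1}{\alpha}(u-v)$, and on $\partial\Omega$ the Dirichlet condition $u=0$ reduces this to $\Delta u=-\frac{1}{\alpha}v$. Consequently
\begin{equation*}
\nu\int_{\Gamma_i}\Delta u\cdot n^\perp = -\frac{\nu}{\alpha}\int_{\Gamma_i}v\cdot n^\perp = -\frac{\nu}{\alpha}\gamma_i(t).
\end{equation*}
Collecting the terms gives the ordinary differential equation
\begin{equation*}
\gamma_i'(t) = -\frac{\nu}{\alpha}\gamma_i(t),
\end{equation*}
whose solution is precisely $\gamma_i(t)=\gamma_i(0)e^{-\nu t/\alpha}$.

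There is no real obstacle here; the only subtle point is recognizing that although $u$ is smooth in the interior, the boundary value of $\Delta u$ is directly expressible in terms of $v|_{\partial\Omega}$ through the identity $\alpha\Delta u=u-v$, which is an algebraic rearrangement of the definition of $v$. This permits converting the non-local viscous contribution into a boundary integral of $v$ itself, yielding the exponential decay factor $e^{-\nu t/\alpha}$ that reflects how fast circulations are dissipated by viscosity on the scale $\alpha/\nu$. Note that this formula is fully consistent with Lemma \ref{circulation} in the limit $\nu\to 0$, where circulations are conserved.
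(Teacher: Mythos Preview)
Your proof is correct and follows essentially the same approach as the paper: mimic Lemma~\ref{circulation}, isolate the new viscous term $\nu\int_{\Gamma_i}\Delta u\cdot n^\perp$, use the identity $\Delta u=-\frac{1}{\alpha}v$ on $\partial\Omega$ (since $u$ vanishes there), and solve the resulting ODE $\gamma_i'=-\frac{\nu}{\alpha}\gamma_i$. The paper's argument is identical in substance and organization.
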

\begin{proof}
We proceed like in the proof of Lemma \ref{circulation} by multiplying \eqref{secondgrade}  by $n^\perp$ and integrating on $\Gamma_i$. We get 
\begin{equation*}
\dt\int_{\Gamma_i}v\cdot n^\perp-\nu\int_{\Gamma_i}\Delta u\cdot n^\perp =0.
\end{equation*}
Since $u$ vanishes on $\Gamma_i$ we have that $\Delta u=-\frac1\al(u-\al\Delta u)=-\frac v\al$ on $\Gamma_i$. We infer that
\begin{equation*}
\dt\int_{\Gamma_i}v\cdot n^\perp+\frac\nu\al\int_{\Gamma_i}v\cdot n^\perp =0.
\end{equation*}
Consequently
\begin{equation*}
\int_{\Gamma_i}v(t)\cdot n^\perp = e^{-\frac\nu\al t} \int_{\Gamma_i}v(0)\cdot n^\perp .
\end{equation*}
\end{proof}

It remains to see if we can get $L^p$ estimates for $q$ and this is where the trouble lies. We can prove the following:
\begin{lemma}
For any $\delta>0$ and $1<r\leq p$ there exists a constant $C$ depending solely on $\delta$, $r$ and $\Om$ such that
\begin{equation}
  \label{eq:14}
\nl {r}{q(t)}\leq (\nl {r}{q_0}+\nhua{u_0})e^{Ct\nu\al^{-\frac32+\frac1{2{r}}-{\delta}}}.   
\end{equation}
\end{lemma}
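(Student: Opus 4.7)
The plan is to turn the vorticity equation \eqref{equq} into an inhomogeneous transport equation by using $-\al\Delta u = v - u$, which yields $\curl \Delta u = \frac{\curl u - q}{\al}$. Substituting into \eqref{equq} gives
\begin{equation*}
\dt q + u\cdot\nabla q = \frac{\nu}{\al}(\curl u - q).
\end{equation*}
This is the key reformulation: it converts the lossy term $\nu\curl\Delta u$ into a damping term $-\frac{\nu}{\al}q$ plus a forcing by $\frac{\nu}{\al}\curl u$, both of which are amenable to $L^r$ energy estimates.

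Next I would multiply by $|q|^{r-2}q$ and integrate over $\Om$. Since $u$ is divergence free and vanishes on $\partial\Om$, the transport term $\int u\cdot\nabla q\,|q|^{r-2}q = \tfrac1r\int u\cdot\nabla|q|^r$ vanishes after integration by parts. The damping term contributes $-\frac{\nu}{\al}\nl{r}{q}^r \leq 0$, and can be discarded for an upper bound. Hölder's inequality on the remaining right-hand side yields
\begin{equation*}
\frac{d}{dt}\nl r{q} \leq \frac{\nu}{\al}\nl r{\curl u}.
\end{equation*}

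The main technical input is the bound on $\nl r{\curl u}\leq \sob 1 r u$, for which I would invoke Proposition \ref{propvelo} with $r$ playing the role of $p$ (this is legitimate as $r\leq p$ implies $q\in L^r$ whenever $q\in L^p$, and $u$ is divergence free and vanishing on $\partial\Om$). This gives
\begin{equation*}
\nl r{\curl u}\leq C\al^{-\frac12+\frac1{2r}-\delta}(\nl r{q}+\nhua u).
\end{equation*}
Combined with the energy identity of the previous paragraph ($\nhua{u(t)}\leq\nhua{u_0}$), we obtain
\begin{equation*}
\frac{d}{dt}\nl r{q}\leq C\nu\al^{-\frac32+\frac1{2r}-\delta}\bigl(\nl r{q}+\nhua{u_0}\bigr).
\end{equation*}

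Finally, setting $f(t)=\nl r{q(t)}+\nhua{u_0}$ turns this into $f'(t)\leq C\nu\al^{-\frac32+\frac1{2r}-\delta}f(t)$, and Grönwall's lemma produces the exponential bound stated in \eqref{eq:14}. I do not foresee any genuine obstacle: the only subtlety is ensuring that Proposition \ref{propvelo} is applicable at exponent $r$ rather than $p$, which is immediate from the inclusion $L^p\hookrightarrow L^r$ on a bounded domain together with the hypothesis $1<r\leq p$.
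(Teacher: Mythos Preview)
Your proposal is correct and follows essentially the same approach as the paper: both rewrite $\curl\Delta u=(\curl u-q)/\al$, perform the $L^r$ energy estimate on the vorticity equation (dropping the favorable damping term), invoke Proposition~\ref{propvelo} at exponent $r$ to bound $\nl r{\curl u}$, and conclude by Gronwall. The only difference is cosmetic ordering---you substitute before multiplying by $|q|^{r-2}q$, whereas the paper multiplies first---and you make explicit the observation that Proposition~\ref{propvelo} applies at exponent $r\leq p$ via $L^p\hookrightarrow L^r$, which the paper uses without comment.
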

\begin{proof}
We multiply \eqref{equq} by $q|q|^{{r}-2}$ and integrate in space to obtain that
\begin{equation*}
\int_\Om \dt q \,q|q|^{{r}-2}-\nu\int_\Om \curl\Delta u\,q|q|^{{r}-2}+\int_\Om u\cdot\nabla q\,q|q|^{{r}-2}=0.    
\end{equation*}
We observe that $\dt q \,q|q|^{{r}-2}=\frac1{{r}}\dt|q|^{{r}}$ and  $\nabla q \,q|q|^{{r}-2}=\frac1{{r}}\nabla|q|^{{r}}$. Let $\om=\curl u$. Making an integration by parts and recalling that $\curl\Delta u=\frac{\om-q}\al$  we deduce that
\begin{equation*}
\frac1{{r}}\dt\nl{{r}}q^{{r}}+\frac\nu\al\nl{{r}}q^{{r}}=\frac\nu\al\int_\Om\om\,q|q|^{{r}-2}\leq \frac\nu\al\nl{{r}}\om\nl{{r}}q^{{r}-1}
\end{equation*}
so
\begin{equation}
  \label{eq:6} \dt\nl{{r}}q+\frac\nu\al\nl{{r}}q\leq \frac\nu\al\nl{{r}}\om.  
\end{equation}

Using \eqref{eq:8u} and recalling that the $H^1_\al$ norm of $u$ is decreasing we bound
\begin{equation*}
\nl{{r}}\om\leq \sob1{r}u\leq C\al^{-\frac12+\frac1{2{r}}-{\delta}}(\nl {r}q+\nhua u)\leq C\al^{-\frac12+\frac1{2{r}}-{\delta}}(\nl {r}q+\nhua {u_0}).
\end{equation*}
Using this in \eqref{eq:6} implies that
\begin{equation*}
\dt\nl {r}q\leq C\nu\al^{-\frac32+\frac1{2{r}}-{\delta}}(\nl {r}q+\nhua{u_0}). 
\end{equation*}
The Gronwall inequality completes the proof of the lemma.
\end{proof}

Recalling that we assumed $\nu\leq\al^{1+\ep}$ we deduce from \eqref{eq:14} the following bound
\begin{equation}\label{finalbound}
\nl {r}{q(t)}\leq \bigl(\nl {r}{q_0}+\nhua{u_0}\bigr)e^{Ct\al^{-\frac12+\frac1{2{r}}+\ep-{\delta}}}  
\leq \bigl(\nl {r}{q_0}+\nhua{u_0}\bigr)e^{Ct}  
\end{equation}
provided that
\begin{equation}\label{condition}
  \ep+\frac1{2r}\geq \frac12+\delta.
\end{equation}

We consider two cases. 

If $\ep\geq\frac12$ then we choose $r=p$ and $\delta=\ep+\frac1{2p}-\frac12$ and we use \eqref{finalbound} to deduce that $q$ is bounded in the space $L^\infty_{loc}([0,\infty);L^{p})$ independently of $\al$ and $\nu$. Given the boundedness of the $L^{p}$ norm of $q$, the decay of the $H^1_\al$ norm of $u^{\al,\nu}$ and the explicit formula for the circulations $\gamma_i(t)$ one can argue as in the case of the $\al$--Euler equations and pass to the limit in \eqref{secondgrade} towards a solution of the Euler equation. Indeed, the additional term $-\nu\Delta u^{\al,\nu}$ is linear and goes to 0. The limit solution have vorticity in $L^\infty_{loc}([0,\infty);L^{p})$ and the convergence holds true in the space $L^\infty_{loc}([0,\infty);W^{s,p})$ for any $s<\frac1p$ (with the strong topology).

If $\ep<\frac12$ then we choose an $r$ such that $1<r\leq p$ and $r<\frac1{1-2\ep}$. The condition \eqref{condition} is verified for $\delta=\ep+\frac1{2r}-\frac12>0$. We obtain then from \eqref{finalbound} that $q$ is bounded in the space $L^\infty_{loc}([0,\infty);L^{r})$ independently of $\al$ and $\nu$. In this case we obtain convergence in $L^\infty_{loc}([0,\infty);W^{s,r})$ for any $s<\frac1r$ and the limit solution have vorticity in $L^\infty_{loc}([0,\infty);L^{r})$. 

The proof of Theorem \ref{secondthm} is completed.

\subsection*{Acknowledgments}   D.I. has been partially funded by the ANR project Dyficolti ANR-13-BS01-0003-01 and by the LABEX MILYON (ANR-10-LABX-0070) of Universit\'e de Lyon, within the program "Investissements d'Avenir" (ANR-11-IDEX-0007) operated by the French National Research Agency (ANR).


\bigskip

\begin{description}
\item[A. V. Busuioc:] Université de Lyon, Université de Saint-Etienne  --
CNRS UMR 5208 Institut Camille Jordan --
Faculté des Sciences --
23 rue Docteur Paul Michelon --
42023 Saint-Etienne Cedex 2, France.\\
Email: \texttt{valentina.busuioc@univ-st-etienne.fr}
\item[D. Iftimie:] Universit\'e de Lyon, Universit\'e Lyon 1 --
CNRS UMR 5208 Institut Camille Jordan --
43 bd. du 11 Novembre 1918 --
Villeurbanne Cedex F-69622, France.\\
Email: \texttt{iftimie@math.univ-lyon1.fr}
\end{description}

\end{document}